\newtheorem{theorem}{Theorem}[section]
\newtheorem{lemma}[theorem]{Lemma}
\newtheorem{proposition}[theorem]{Proposition}
\newtheorem{corollary}[theorem]{Corollary}
\theoremstyle{definition}
\newtheorem{definition}[theorem]{Definition}
\theoremstyle{remark}
\numberwithin{equation}{section}
\begin{document}

\title [On symmetric and approximately symmetric operators] {On symmetric and approximately symmetric operators}
\author[D. Khurana ]{ Divya Khurana }

\address[Khurana]{IIM Ranchi, Suchana Bhawan, Audrey House Campus, Meur’s Road, Ranchi, Jharkhand-834008, India}
\email{divyakhurana11@gmail.com, divya.khurana@iimranchi.ac.in}

\renewcommand{\subjclassname}{\textup{2020} Mathematics Subject Classification}
\subjclass{47L05; 46B20}
\keywords{Birkhoff-James orthogonality; left-symmetric points; right-symmetric points; symmetric points; norm-attainment set; smooth spaces.}
\begin{abstract}
We introduce the notion of local orthogonality preserving operators to study the right-symmetry of operators.  As a consequence of our work, we show that any smooth compact operator defined on a smooth and reflexive Banach space is either a rank one operator or it is not right-symmetric. We show that there are no right-symmetric smooth compact operators defined on a smooth and reflexive Banach space that fails to have any non-zero left-symmetric point. We also study approximately orthogonality preserving and reversing operators (in the sense of Chmieli\'{n}ski and Dragomir). We show that on a finite-dimensional Banach space, an operator is approximately orthogonality preserving (reversing) in the sense of Dragomir if and only if it is an injective operator.
\end{abstract}
\maketitle
\section{Introduction}
Left-symmetric and right-symmetric operators have been studied by many researchers (for more details see \cite{PMW}, \cite{GPSBOOK}, \cite{SGP}, \cite{T1}, \cite{T2}, and the references cited therein). In this article, we study right-symmetric operators in the sense of Birkhoff-James and approximately orthogonality preserving/reversing operators in the sense of  Chmieli\'{n}ski and Dragomir.

We first provide the necessary notations and basic background for our work. Throughout the text symbols $X, Y$ are used for real normed linear spaces. By $B_X=\{x:x\in X, \|x\|\leq1\}$ we denote the closed unit ball of $X$ and by $S_X=\{x:x\in X, \|x\|=1\}$ we denote the unit sphere of $X$. By $X^*$ we denote the dual space of  $X$. We use the symbol $\mathcal{B}(X,Y)$ ($\mathcal{K}(X,Y)$) to denote the space of all bounded (compact) linear operators defined from $X$ to $Y$. For the case $X=Y$, we use symbol $\mathcal{B}(X)$ ($\mathcal{K}(X)$) to denote the space of all bounded (compact) linear operators defined on $X$. If $T\in \mathcal{B}(X,Y)$  and $x\in S_X$ such that $\|Tx\|=\|T\|$ then $x$ is said to be a norm attainment element for $T$. We use $M_T=\{x\in S_X:\|Tx\|=\|T\|\}$ to denote the collection of all norm attainment elements for $T$. For $T\in \mathcal{B}(X,Y)$ we use the symbol $[T]$ to denote $[T]=\inf \{\|Tx\|:x\in S_X\}$.

For $0\not=x\in X$, we denote $\mathcal{J}(x)=\{f: f\in S_{X^*}, f(x)=\|x\|\}$. It follows from a simple application of the Hahn-Banach theorem that  $\mathcal{J}(x)$ is always a non-empty for each $0\not=x\in X$. Also, observe that $\mathcal{J}(x)$ is a convex set for each $0\not=x\in X$. We say that $0\not=x\in X$ is a smooth point if $\mathcal{J}(x)$ contains only one element.  If $\mathcal{J}(x)$ contains only one element for each $0\not=x\in X$, then we say that $X$ is a smooth space. If for any $x,y\in S_{X}$, $\|tx+(1-t)y\|=1$ for some $0\leq t\leq 1$ implies $x=y$, then $X$ is said to be a strictly convex space.

For normed linear spaces, Birkhoff-James orthogonality (\cite{Birkhoff}, \cite {James2}) is one of the natural generalizations of the usual inner product orthogonality. For $x,y\in X$, we say that $x$ Birkhoff-James orthogonal to $y$ if $\|x+\lambda y\|\geq \|x\|$ for all scalars $\lambda\in\mathbb{R}$. By using the symbol $x\perp_By$  we denote $x$ is Birkhoff-James orthogonal to $y$. In \cite[Theorem 2.1]{James2}, the authors proved that for $0\not=x\in X$,
$x\perp_B y\mbox{~if~ and~ only ~if} f(y)=0$ for some $f\in \mathcal{J}(x)$.
 By the homogeneity property of Birkhoff-James orthogonality, we can restrict ourselves to norm one element. If $\mathcal{A}\subseteq X$ and $x\in X$, we write $\mathcal{A}\perp_B x$ ($x\perp_B \mathcal{A}$) if $a\perp_B x$ ($x\perp_B a$) for all $a\in\mathcal{A}$. 
 
There are various notions of approximate Birkhoff-James orthogonality. In this work, we consider two of these notions. In \cite{Drag}, the following notion of approximate  Birkhoff-James orthogonality was introduced by  Dragomir.  For $x,y\in X$ and $\varepsilon\in[0,1)$, we say that $x$
is approximately Birkhoff-James orthogonal to $y$ if
$\|x+\lambda y\|\geq (1-\varepsilon) \|x\|$ for all
scalars $\lambda\in\mathbb{R}$. We use the following modified version of this approximate Birkhoff-James orthogonality introduced by Chmieli\'{n}ski for our work.  
For $x,y\in X$ and $\varepsilon\in[0,1)$, we say that $x$ is 
approximately Birkhoff-James orthogonal to $y$ if $\|x+\lambda y\|\geq
\sqrt{1-\varepsilon^2}\|x\|$ for all scalars $\lambda\in\mathbb{R}$ (see \cite{C} for details). By using the symbol  $x\perp_D^\varepsilon y$ we denote $x$ is approximately Birkhoff-James orthogonal to $y$ in this notion. In \cite[Lemma 3.2]{MSP}, the authors proved that for $0\not=x\in X$, $y\in X$ and $\varepsilon\in[0,1)$, $x\perp_D^\varepsilon y$ if and only if there exists $f\in S_{X^*}$ such that
\begin{align}\label{Dchar}
\ |f(x)|\geq \sqrt{1-\varepsilon^2}\|x\|
\mbox{~ and~} f(y)=0.
\end{align}

In \cite{C}, Chmieli\'{n}ski defined another variation of approximate
Birkhoff-James orthogonality. For $x,y \in X$ and $\varepsilon\in
[0,1)$, we say that  $x$ is approximately orthogonal to $y$ if $\|x+\lambda y\|^2\geq
\|x\|^2-2\varepsilon\|x\|\|\lambda y\|$ for all scalars $\lambda \in
\mathbb{R}$. We use the notation 
$x\perp_B^\varepsilon y$ to denote this variation of approximate
Birkhoff-James orthogonality. In \cite {CSW}, the authors proved that for $0\not=x\in X$, $y\in X$ and $\varepsilon\in[0,1)$,
$x\perp_B^\varepsilon y \mbox{~if~and~only~if~}|f(y)|\leq \varepsilon \|y\| \mbox{~for~some~}f\in \mathcal{J}(x)$.

The following characterization of smooth points in terms of the right-additivity of Birkhoff-James orthogonality relation obtained in  \cite{James2} will be used in our work.  $0\not=x\in X$ is a smooth point in $X$ if and only if 
$$
x\perp_B y\ \ \mbox{and}\ \ x\perp_B z\mbox{~implies~that~} x\perp_B y+z\mbox{~for~all~}y,z\in X.
$$

On a normed linear space, the Birkhoff-James orthogonality relation is not always symmetric. For this reason, the following notations of the local symmetry of Birkhoff-James orthogonality were introduced in \cite{S}.

\begin{definition}
Let ${X}$ be a normed linear space and $x\in X$. Then $x$ is left-symmetric if $x\perp_B y$ implies $y\perp_B x$ for all $y\in {X}$. 

Similarly, $x$ is said to be right-symmetric if $y\perp_B x$ implies $x\perp_B y$ for all $y\in Y$.
\end{definition}

 In \cite{CW}, the authors gave the following definition.  Let $\varepsilon\in[0,1)$. Then Birkhoff-James orthogonality is said to be $\varepsilon$-symmetric in a normed linear space $X$ if $x\perp_B y$ implies $y\perp_B^\varepsilon x$ for all $x,y\in X$.

We say that $T\in\mathcal{B}(X)$ is orthogonality preserving if $x\perp_B y$ implies $Tx\perp_B Ty$ for all $x,y\in X$. In \cite{CAOT}, the author defined the following notion of orthogonality reversing operator.
 $T\in\mathcal{B}(X)$ is said to be orthogonality reversing if $x\perp_B y$ implies $Ty\perp_B Tx$ for all $x,y\in X$. 

 In \cite{T3} and  \cite{T4}, the authors proved that if $T\in\mathcal{B}(X)$ is orthogonality preserving then $T$ is a scalar multiple of a linear isometry, where $X$ is a Banach space. It was proved in \cite{WOR} that if $dim~X\geq 3$ then $X$ has an orthogonality reversing operator if and only if $X$ is an inner product space. For more details on orthogonality preserving and reversing operators, we refer the readers to  \cite{T3}, \cite{CAOT}, \cite{GPSBOOKC}, \cite{T4}, \cite{WOR}, and the references cited therein.

Motivated by these definitions we now introduce the following local version of one-sided orthogonality preserving and reversing operators in the sense of Birkhoff-James. We also introduce the notion of local and global versions of approximate orthogonality preserving and reversing operators in the sense of Chmieli\'{n}ski and Dragomir.

\begin{definition}
    Let $X,Y$ be normed linear spaces and $T\in\mathcal{B}(X,Y)$. Let $x\in S_X$. We say that $T$ is left orthogonality preserving (reversing) at $x$ if $x\perp_B y$ implies $Tx\perp_BTy$ ($Ty\perp_BTx$) for all $y\in X$.

    In a similar way we can define right orthogonality preserving and reversing operators at $x$.
\end{definition}

 We now give a few examples of operators that are locally left-orthogonality preserving. Let $X$ be a smooth Banach space. Let $T\in\mathcal{B}(X)$ and $x\in M_T$. Then \cite[Theorem 2.4]{SJMAA} implies that $T$ is left orthogonality preserving at $x$.

Let $X,Y$ be normed linear spaces. Let $x\in S_X$ be a smooth point and $y\in S_X$. Let $H_x$ be a hyperplane in $X$ such that $x\perp_BH_x$. Let $\{x_\alpha:\alpha\in\Lambda\}$ be a Hamel basis for $H_x$. Then $\{x,x_\alpha: \alpha\in\Lambda\}$ is a Hamel basis for $X$. We now define $T:X\longrightarrow Y$ by $Tx=y$ and $Tx_\alpha=0$ for all $\alpha\in\Lambda$. Then $T\in \mathcal{B}(X,Y)$ and $x\in M_T$. It follows from the right additivity of Birkhoff-James orthogonality at $x$ that any $z\in X$ with $x\perp_Bz$ has to be an element of $H_x$. Thus, it follows $T$ that $T$ is left orthogonality preserving at $x$.

Now, consider $X=\ell_\infty^2$ and $x=(1,0)\in X$. Let $f_x=(1,0)\in \ell_1^2$. Then $\mathcal{J}(x)=\{f_x\}$ and $y\in S_X$ with $x\perp_By$ has only two possible options $(0,1)$ and $(0,-1)$ (see Figure 1). Let $y=(0,1)\in X$.

\begin{center}
\begin{tikzpicture}[scale=1.1]
\draw (-3.8,1.2)  node {$z$};
\draw [fill] (-4,1) circle [radius=.05];
\draw[thick] (-6,1)--(-4,1) node[right]{};
\draw[thick] (-6,-1)--(-4,-1) node[right]{};
\draw[thick] (-4,1)--(-4,-1) node[right]{};
\draw[thick] (-6,1)--(-6,-1) node[right]{};
\draw[thick, dashed] (-7.2,0)--(-2.8,0) node[above]{};
\draw[thick, red, dashed] (-5,-2)--(-5,2) node[above]{};
\draw (-3.8,0.2)  node {$x$};
\draw (-5.2,1.2)  node {$y$};
\draw (-5.4,-1.2)  node {$-y$};
\draw [fill] (-4,0) circle [radius=.05];
\draw [fill] (-5,1) circle [radius=.05];
\draw [fill] (-5,-1) circle [radius=.05];
\draw[thick, red, dashed] (-4,2)--(-4,-2) node[right]{};
\draw (-4.2,-1.6) node {$f_x$};
\draw (-5.6,-1.6) node {$\mbox{ker} ~f_x$};
\draw (-5,-2.5) node {Figure 1};
\end{tikzpicture}
\end{center}

Clearly, $X=\mbox{span}\{x,y\}$. We define $A:X\longrightarrow X$ by $A(\alpha x+\beta y)=\alpha (1,1)$ for $\alpha,\beta\in \mathbb{R}$. By the definition of $A$ and our choice of $x,y$ it follows that $x\in M_A$ and $T$ is left orthogonality preserving at $x$. Observe that, if we take $z=(1,1)\in X$ then $z\perp_B x$ but $Tz\not\perp_B Tx$, so $T$ is not orthogonality preserving.

\begin{definition}
    Let $X, Y$ be normed linear spaces. Let $\mathcal{A}\subseteq X$ and $\varepsilon_\mathcal{A}\in[0,1)$. We say that $T\in\mathcal{B}(X,Y)$ is  left $\varepsilon_\mathcal{A}$-approximately orthogonality preserving (reversing) with respect to $\mathcal{A}$ in the sense of Chmieli\'{n}ski, in short, $\varepsilon_\mathcal{A}-CLAOP$ ($\varepsilon_\mathcal{A}-CLAOR$), if $x\perp_B y$ implies $Tx\perp_B^{\varepsilon_\mathcal{A}} Ty$ ($Ty\perp_B^{\varepsilon_\mathcal{A}} Tx$) for all
$x\in\mathcal{A}, y\in X$. 

In this case if $\mathcal{A}=\{x\}$ for some $x\in X$ we simply write  $T$ is $\varepsilon_x-CLAOP$ ($\varepsilon_x-CLAOR$) at $x$. 

In a similar way we can define operators which are right $\varepsilon_\mathcal{A}$-approximately orthogonality preserving (reversing) with respect to $\mathcal{A}$ in the sense of Chmieli\'{n}ski
\end{definition}
\begin{definition}
    Let $X,Y$ be normed linear spaces and $T\in\mathcal{B}(X,Y)$. Let $x\in S_X$ and $\varepsilon\in[0,1)$. We say that $T$ is $\varepsilon$-approximately orthogonality preserving (reversing) in the sense of Dragomir, in short, $\varepsilon-DAOP$ ($\varepsilon-DAOR$), if $x\perp_B y$ implies $Tx\perp_D^\varepsilon Ty$ ($Ty\perp_D^\varepsilon Tx$) for all $x,y\in X$.
\end{definition}

The following result was obtained in \cite{SPMR}. An operator $T\in S_{\mathcal{K}(X,Y)}$ is smooth if and only if $M_T=\{\pm x_0\}$ and $Tx_0$ is a smooth point in $Y$, where $X$ is a reflexive Banach space, $Y$ is a normed linear space and $x_0\in S_X$. 

We now recall some of the results related to the right-symmetry of operators. In \cite[Theorem 2.3]{SGP}, the authors proved that any smooth compact operator on a finite-dimensional smooth and strictly convex Banach space can not be right-symmetric. In \cite [Theorem 15.6]{GPSBOOK}  
   the authors proved a similar result on a smooth, reflexive, and strictly convex Banach space. In \cite[Theorem 3.1]{PMW}, the authors proved that any smooth compact operator from a strictly convex and reflexive Banach space to a strictly convex normed linear space can not be right-symmetric. \\
   
   In section 2, we continue a similar study related to the above-mentioned results on right-symmetric operators.  We show that if a right-symmetric compact operator $T\in S_{\mathcal{K}(X,Y)}$ with $M_T=\{\pm x_0\}$ is left orthogonality preserving at $x_0$ then $x_0$ is left-symmetric, where $X$ is a reflexive Banach space, $Y$ is a normed linear space and $x_0\in S_X$. This result has the following direct consequence. If $X$ is a smooth and reflexive Banach space such that $S_X$ fails to have any left-symmetric point then any smooth compact operator on $X$ is not right-symmetric. We show that if $T\in S_{\mathcal{K}(X,Y)}$ is a right-symmetric compact operator such that $M_T=\{\pm x_0\}$ and $T$ is left orthogonality preserving at $x_0$ then either $T$ is a rank one operator or $T$ is not right-symmetric, where $X$ is a reflexive Banach space, $Y$ is a smooth normed linear space and $x_0\in S_X$. As a corollary, we show that any smooth compact operator defined on a smooth and reflexive Banach space is either a rank one operator or it is not right-symmetric. We also show that for a rank one smooth compact operator $T$ defined on a smooth and reflexive Banach space with dimension at least 2, it is necessary that $Tx_0\in Ker~T$, where $M_T=\{\pm x_0\}$ for some $x_0\in S_X$.

In section 3, we study $\varepsilon$-DAOP, $\varepsilon$-DAOR and  $\varepsilon_\mathcal{A}$-CLAOR operators. We show that on a finite-dimensional Banach space an operator is $\varepsilon$-DAOP (or $\varepsilon$-DAOR) if and only if it is injective.

\section{Right-Symmetric operators}\label{section 2}
We start this section with the following simple observation.
\begin{lemma}\label{lem1}
    Let $X$ and $Y$ be normed linear spaces. Let $T\in S_{\mathcal{B}_{(X,Y)}}$ and $x_0\in S_X$. If $x_0\in M_T$ then $x_0\perp_B Ker ~T$.
\end{lemma}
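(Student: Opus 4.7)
The plan is to unpack the definition of Birkhoff--James orthogonality and observe that the result falls out immediately from the operator-norm inequality together with the norm-attainment hypothesis. Concretely, I need to show that for every $y\in \ker T$ and every scalar $\lambda\in\mathbb{R}$, the inequality $\|x_0+\lambda y\|\geq \|x_0\|=1$ holds.

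First I would fix an arbitrary $y\in \ker T$ and an arbitrary $\lambda\in\mathbb{R}$, and apply the fact that $T\in S_{\mathcal{B}(X,Y)}$ means $\|T\|=1$, so that $\|x_0+\lambda y\|\geq \|T(x_0+\lambda y)\|$. Next I would use linearity of $T$ to rewrite $T(x_0+\lambda y)=Tx_0+\lambda Ty$, and then invoke $y\in \ker T$ to drop the second summand, giving $\|T(x_0+\lambda y)\|=\|Tx_0\|$. Finally, I would use the hypothesis $x_0\in M_T$ to identify $\|Tx_0\|=\|T\|=1=\|x_0\|$. Chaining these together yields $\|x_0+\lambda y\|\geq \|x_0\|$, which by the definition of Birkhoff--James orthogonality gives $x_0\perp_B y$. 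Since $y\in \ker T$ was arbitrary, this is precisely $x_0\perp_B \ker T$.

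There is no real obstacle here; the lemma is essentially a bookkeeping observation that captures why norm-attaining vectors behave nicely with respect to the kernel. The only small care needed is ensuring the three distinct facts ($\|T\|=1$, linearity with $Ty=0$, and $\|Tx_0\|=\|T\|$) are cited in the correct order so that the chain of (in)equalities closes cleanly. The statement does not require either space to be reflexive, smooth, or strictly convex, and it will serve as a convenient starting point for the stronger right-symmetry results that follow in Section~2.
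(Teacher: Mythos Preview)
Your proof is correct and follows essentially the same route as the paper's: fix $y\in\ker T$, use $\|T\|=1$ and $Ty=0$ to obtain $\|x_0+\lambda y\|\geq\|T(x_0+\lambda y)\|=\|Tx_0\|=1=\|x_0\|$, and conclude $x_0\perp_B y$. The only cosmetic difference is that the paper writes the chain as a single displayed line rather than narrating each step.
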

\begin{proof}
    Let $y\in Ker~ T$. Then $Ty=0$ and
    $$1=\|Tx_0\|=\|Tx_0+\lambda Ty\|\leq \|T\|\|x_0+\lambda y\|$$
    for all $\lambda\in\mathbb{R}$. Thus, $\|x_0+\lambda y\|\geq \|x_0\|$ for all $\lambda\in\mathbb{R}$ and the result follows.
\end{proof}

 In the next result, we show that for any compact operator $T\in S_{\mathcal{K}{(X, Y)}}$, if $Ker~T\not\perp_B x_0$ then $T$ can not be right-symmetric, where $M_T=\{\pm x_0\}$ for some $x_0\in S_X$, $X$ is a reflexive Banach space and $Y$ is a normed linear space. 

\begin{theorem}
    Let $X$ be a reflexive Banach space. Let $Y$ be a normed linear space and $T\in S_{\mathcal{K}{(X,Y)}}$  such that $M_T=\{\pm x_0\}$ for some $x_0\in S_X$. Then, either of the following is true:
    \begin{itemize}
        \item [(i)] $Ker~T\perp_B x_0.$
        \item[(ii)] $T$ is not right-symmetric.
    \end{itemize}
    \end{theorem}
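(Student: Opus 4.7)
The plan is to prove the contrapositive: assuming $Ker~T \not\perp_B x_0$, I construct a compact operator $S \in \mathcal{B}(X,Y)$ witnessing that $T$ is not right-symmetric, i.e., satisfying $S \perp_B T$ but $T \not\perp_B S$. First I extract a nonzero $z \in Ker~T$ with $z \not\perp_B x_0$. By the James characterization of Birkhoff-James orthogonality quoted in the introduction, the condition $z \not\perp_B x_0$ is equivalent to saying that no $g \in \mathcal{J}(z)$ satisfies $g(x_0) = 0$, so I may fix any $g \in \mathcal{J}(z)$ and record that $g(x_0) \neq 0$.

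With this $g$, define the rank-one (hence compact) operator $S : X \to Y$ by $Sx = g(x)\,Tx_0$, so that $\|S\| = \|g\| \cdot \|Tx_0\| = 1$. To rule out $T \perp_B S$, I invoke the Sain--Paul type characterization of Birkhoff-James orthogonality for compact operators on a reflexive Banach space, which forces $Tu \perp_B Su$ for some $u \in M_T$ whenever $T \perp_B S$; since $M_T = \{\pm x_0\}$ and $Sx_0 = g(x_0)\,Tx_0$ is a nonzero scalar multiple of $Tx_0$, we have $Tx_0 \not\perp_B Sx_0$, hence $T \not\perp_B S$. Conversely, to verify $S \perp_B T$, put $u = z/\|z\|$: then $g(u) = 1 = \|g\|$ shows $u \in M_S$, while $Tu = 0$ (since $z \in Ker~T$) makes $Su \perp_B Tu$ trivial, so the easy sufficiency direction of the operator orthogonality characterization yields $S \perp_B T$. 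This contradicts right-symmetry of $T$.

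The main technical pillar is the Sain--Paul type characterization that turns $T \not\perp_B S$ into a condition which can be checked only at the two points $\pm x_0$; this is where reflexivity of $X$ and compactness of $T$ enter essentially. Aside from this, the construction is natural: reading off from $z \not\perp_B x_0$ that every support functional $g$ of $z$ satisfies $g(x_0) \neq 0$, one routes such a $g$ into the range direction $Tx_0$, which simultaneously places $u = z/\|z\|$ into $M_S$ (using $g \in \mathcal{J}(z)$) and guarantees that $Sx_0$ remains a nonzero scalar multiple of $Tx_0$ (using $g(x_0) \neq 0$).
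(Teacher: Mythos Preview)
Your proof is correct and follows essentially the same approach as the paper: both construct the rank-one operator $x \mapsto g(x)\,Tx_0$ for a support functional $g$ of a kernel element $z$ with $z \not\perp_B x_0$, verify $S \perp_B T$ via $z/\|z\| \in M_S$ and $Tz=0$, and then invoke the Sain--Paul characterization \cite[Theorem~2.1]{PSGD} (which needs reflexivity of $X$ and compactness of $T$) at $M_T=\{\pm x_0\}$ to obtain $T \not\perp_B S$. The paper phrases the same construction through a hyperplane decomposition $X=\operatorname{span}\{z\}\oplus H_z$ with $z\perp_B H_z$ rather than directly via the functional $g$, but this is only a notational difference.
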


\begin{proof}
    If $Ker~T\perp_B x_0$ then (i) follows. 
    
    Let $Ker~T\not\perp_B x_0$ and $T$ is right-symmetric. Then there exists $y\in S_X\cap Ker~T$ with $y\not\perp_B x_0$. 
    Let $H_{y}$ be a hyperplane such that $y\perp_B H_{y}$. Then it follows from our choice of $y$ that $x_0\not\in H_{y}$. We now define the following map $$A:X\longrightarrow Y$$ by $A(\alpha y+h)=\alpha Tx_0$ for $\alpha\in \mathbb{R}$ and $h\in H_{y}$. Clearly, $A$ is a compact operator and $y\in M_A$. Also, $y\in Ker~T$ implies that $Ay\perp_B T y$. Since $y\in M_A$, we get $A\perp_B T$.
    
    But $x_0\not\in H_y$ implies that there exist $0\not=a_{x_0}\in \mathbb{R}$ and $0\not=h_{x_0}\in H_y$ such that $x_0={a_{x_0}}y+h_{x_0}$. Thus,  $Tx_0\not\perp_B Ax_0$. Now, by using \cite[Theorem 2.1]{PSGD} we get $T\not\perp_B A$, a contradiction. Thus (ii) follows.
    \end{proof}    

We now obtain a characterization of a Hilbert space of dimension at least 3 in terms of the orthogonality relationship of $x_0\in M_T$ and $Ker~T$ for any $T\in S_{\mathcal{K}_{(X)}}$, where $X$ is a strictly convex reflexive  Banach space with dimension atleat 3.
\begin{theorem}
    Let $X$ be a strictly convex reflexive  Banach space with $dim~X\geq 3$. Then, the following are equivalent.
    \begin{itemize}
        \item [(i)] $X$ is a Hilbert space.
        \item[(ii)]  $Ker~T\perp_B x_0$ for each $T\in S_{\mathcal{K}(X)}$ with $M_T=\{\pm x_0\}$ for some $x_0\in S_X$. 
    \end{itemize}
\end{theorem}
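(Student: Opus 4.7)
The plan is to prove the two implications separately. The direction (i) $\Rightarrow$ (ii) is essentially cosmetic: in a Hilbert space, Birkhoff-James orthogonality coincides with inner product orthogonality and is therefore symmetric. Lemma \ref{lem1} tells us that $x_0 \perp_B \mathrm{Ker}\,T$ for any $T \in S_{\mathcal{K}(X)}$ with $x_0 \in M_T$; symmetry of $\perp_B$ then immediately yields $\mathrm{Ker}\,T \perp_B x_0$.

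For the nontrivial direction (ii) $\Rightarrow$ (i), I would reduce the statement to showing that Birkhoff-James orthogonality is symmetric on the whole of $X$, and then invoke the classical theorem of James that a real Banach space of dimension at least $3$ with symmetric Birkhoff-James orthogonality must be a Hilbert space. The hypothesis $\dim X \geq 3$ in the statement exists precisely to make this final invocation available.

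To establish symmetry, I would fix $x, y \in S_X$ with $x \perp_B y$ and aim to produce $y \perp_B x$. By the James characterization of Birkhoff-James orthogonality, there exists $f \in \mathcal{J}(x)$ with $f(y) = 0$. Consider the rank-one operator $T \colon X \to X$ defined by $T(z) := f(z)\, x$. Clearly $T$ is compact, $\|T\| \leq \|f\|\|x\| = 1$, and $Tx = x$, so $\|T\| = 1$ and $T \in S_{\mathcal{K}(X)}$. Furthermore $\mathrm{Ker}\,T = \mathrm{Ker}\,f$, so $y \in \mathrm{Ker}\,T$. The crucial point is to verify $M_T = \{\pm x\}$: since $\|Tz\| = |f(z)|$, we have $M_T = \{z \in S_X : |f(z)| = 1\}$, and the set $\{z \in S_X : f(z) = 1\}$ is a (nonempty) face of $B_X$, which in a strictly convex space must be a singleton, namely $\{x\}$. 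Applying hypothesis (ii) to this $T$ gives $\mathrm{Ker}\,T \perp_B x$, and in particular $y \perp_B x$.

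The main technical point is the identification $M_T = \{\pm x\}$, which is the only place strict convexity enters, and is what allows us to channel the global hypothesis (ii) into a statement about an arbitrary pre-chosen pair $x \perp_B y$. Reflexivity does not seem strictly necessary for the (ii) $\Rightarrow$ (i) direction, since the rank-one operator is automatically compact and the required functional comes from Hahn-Banach, but it is inherited from the ambient setting of Section 2 and makes the statement parallel to the previous theorem.
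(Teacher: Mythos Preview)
Your proof is correct and follows essentially the same route as the paper: both construct the rank-one projection onto $\operatorname{span}\{x\}$ along a Birkhoff--James orthogonal hyperplane (you write it as $T(z)=f(z)\,x$ for $f\in\mathcal{J}(x)$, the paper writes it via the decomposition $X=\operatorname{span}\{x_0\}\oplus H_{x_0}$), use strict convexity to pin down $M_T=\{\pm x\}$, and then apply (ii) together with a classical Hilbert-space characterization. The only cosmetic difference is that you deduce full symmetry of $\perp_B$ and cite James, whereas the paper deduces the hyperplane condition $H_{x_0}\perp_B x_0$ for every $x_0$ and cites \cite[Theorem~4.14]{SUR}; these are equivalent endpoints.
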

\begin{proof}
    (i) $\implies$ (ii)  Birkhoff-James orthogonality is symmetric in a  Hilbert space. Thus, the result follows by using Lemma~\ref{lem1}.
    
    We now prove (ii) $\implies$ (i). Let $x_0\in S_X$ and $H_{x_0}$ be a hyperplane such that $x_0\perp_B H_{x_0}$. We now define the following map $$T:X\longrightarrow X$$ by $T(\alpha x_0+h)=\alpha x_0$ for $\alpha\in \mathbb{R}$ and $h\in H_{x_0}$. Clearly, $T$ is a compact operator. Also, by using strict convexity of $X$, it follows that $M_T=\{\pm x_0\}$. It follows from the definition of $T$ that $Ker~T=H_{x_0}$. Now, (ii) implies that $Ker~T\perp_B x_0$ and thus $H_{x_0}\perp_B x_0$. Thus, (ii) $\implies$ (i) follows by using \cite[Theorem 4.14]{SUR}.
\end{proof}

In the following result, we establish a relationship between the smoothness of $x_0$ and $Tx_0$ for any operator $T\in S_{\mathcal{B}(X,Y)}$ which is left orthogonality preserving at $x_0$, for some $x_0\in M_T$, where $X,Y$ are normed linear spaces and $x_0\in S_X$.
\begin{lemma}\label{lemmasmoothness}
    Let $X$ and $Y$ be normed linear spaces. Let $T\in S_{\mathcal{B}(X,Y)}$ be such that $x_0\in M_T$ for some $x_0\in S_X$. Suppose that $T$ is left orthogonality preserving at $x_0$.
    \begin{itemize}
        \item[(i)] If $Tx_0$ is smooth then $x_0$ is smooth.
        \item[(ii)] If $x_0$ is smooth and $T$ is onto then $Tx_0$ is smooth.
    \end{itemize}
\end{lemma}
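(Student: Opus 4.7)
The plan is to prove (i) via the right-additivity characterization of smooth points (stated just before the definition of left/right-symmetry in the introduction), and (ii) via the supporting-functional characterization through the set $\mathcal{J}(\cdot)$.

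For (i), I assume $Tx_0$ is smooth and take arbitrary $y,z\in X$ with $x_0\perp_B y$ and $x_0\perp_B z$. The hypothesis that $T$ is left orthogonality preserving at $x_0$ immediately gives $Tx_0\perp_B Ty$ and $Tx_0\perp_B Tz$. Smoothness of $Tx_0$ provides right-additivity of $\perp_B$ at $Tx_0$, so $Tx_0\perp_B Ty+Tz=T(y+z)$, meaning $\|Tx_0+\lambda T(y+z)\|\geq 1$ for every $\lambda\in\mathbb{R}$. Since $\|T\|=1$, this yields
$\|x_0+\lambda(y+z)\|\geq \|T(x_0+\lambda(y+z))\|\geq 1 = \|x_0\|$,
and hence $x_0\perp_B y+z$. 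The right-additivity characterization then shows $x_0$ is smooth.

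For (ii), write $\mathcal{J}(x_0)=\{f_0\}$ and take any $g\in \mathcal{J}(Tx_0)$. The composition $f=g\circ T\in X^*$ satisfies $\|f\|\leq \|g\|\|T\|=1$ together with $f(x_0)=g(Tx_0)=1=\|x_0\|$, which forces $\|f\|=1$ and $f\in \mathcal{J}(x_0)=\{f_0\}$. Consequently $g\circ T=f_0$. Because $T$ is surjective, the pullback map $g\mapsto g\circ T$ is injective on $Y^*$, so $g$ is uniquely determined by $f_0$. Thus $\mathcal{J}(Tx_0)$ contains at most one element, and being nonempty, it is a singleton, proving $Tx_0$ is smooth.

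The main obstacle in (i) is the transfer of orthogonality: the conclusion $Tx_0\perp_B T(y+z)$ lives in $Y$, while the desired $x_0\perp_B y+z$ lives in $X$; it is precisely the normalization $\|T\|=1$ (i.e.\ $T\in S_{\mathcal{B}(X,Y)}$) that bridges the two via $\|T(\cdot)\|\leq \|\cdot\|$. In (ii), the subtle point is that the left orthogonality preservation hypothesis does not appear to be needed: smoothness of $x_0$ together with surjectivity of $T$ already forces $Tx_0$ to be smooth, with the injectivity of the pullback $Y^*\to X^*$ doing all the work.
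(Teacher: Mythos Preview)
Your proof of (i) is correct and essentially identical to the paper's: both use the right-additivity characterization of smoothness, push $x_0\perp_B y$, $x_0\perp_B z$ forward via the left orthogonality preserving property, use smoothness of $Tx_0$ to get $Tx_0\perp_B T(y+z)$, and then pull back via $x_0\in M_T$ and $\|T\|=1$.

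Your proof of (ii) is also correct but follows a genuinely different route. The paper again uses right-additivity: given $Tx_0\perp_B y$ and $Tx_0\perp_B z$, it lifts $y,z$ through surjectivity to $x_1,x_2$, uses $x_0\in M_T$ to conclude $x_0\perp_B x_1$ and $x_0\perp_B x_2$, applies smoothness of $x_0$ to get $x_0\perp_B x_1+x_2$, and then invokes the left orthogonality preserving hypothesis to push this forward to $Tx_0\perp_B y+z$. You instead work directly with supporting functionals: every $g\in\mathcal{J}(Tx_0)$ pulls back to $g\circ T\in\mathcal{J}(x_0)=\{f_0\}$, and injectivity of the pullback (from surjectivity of $T$) forces $g$ to be unique. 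Your observation at the end is accurate and is the real payoff of your approach: the paper's argument for (ii) genuinely uses the left orthogonality preserving hypothesis, whereas yours shows that smoothness of $x_0$, $x_0\in M_T$, and surjectivity of $T$ already suffice for smoothness of $Tx_0$.
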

\begin{proof}
    (i) Let $y,z\in S_X$ be such that $x_0\perp_B y$ and $x_0\perp_B z$. Then by using $T$ is left orthogonality preserving at $x_0$, we get $Tx_0\perp_B Ty$ and $Tx_0\perp_B Tz$. Now, by using the smoothness of $Tx_0$, we get $Tx_0\perp_B Ty+Tz$. Thus, $x_0\in M_T$ implies that $x_0\perp_B y+z$ and the result follows.

    (ii) Let $y,z\in Y$ be such that $Tx_0\perp_B y$ and $Tx_0\perp_B z$. Let $x_1,x_2\in X$ be such that $Tx_1=y$ and $Tx_2=z$. Then $x_0\in M_T$ implies that $x_0\perp_B x_1$ and $x_0\perp_B x_2$. Now, by using the smoothness of $x_0$, we get $x_0\perp_B x_1+x_2$. Thus, by using the left orthogonality preserving property of $T$ at $x_0$, we get $Tx_0\perp_B y+z$, and the result follows.
\end{proof}

We now study a connection between the left-symmetry of $x_0\in M_T$ and the right-symmetry of an operator $T\in S_{\mathcal{K}(X,Y)}$ which is left orthogonality preserving at $x_0$, where  $M_T=\{\pm x_0\}$ for some $x_0\in S_X$, $X$ is a reflexive Banach space and $Y$ is a normed linear space. 
\begin{proposition}\label{lem2}
    Let $X$ be a reflexive Banach space. Let $Y$ be a normed linear space and $T\in S_{\mathcal{K}({X,Y})}$ such that $M_T=\{\pm x_0\}$ for some $x_0\in {S_{{X}}}$. Let $T$ be left orthogonality preserving at $x_0$. If $T$ is right-symmetric then $x_0$ is left-symmetric.
\end{proposition}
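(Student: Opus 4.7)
The plan is to fix an arbitrary $y \in S_X$ with $x_0 \perp_B y$ and deduce $y \perp_B x_0$ by exploiting the right-symmetry of $T$. First observe that $y$ and $x_0$ must be linearly independent: otherwise $y = \pm x_0$, but then $x_0 \perp_B y$ would give $0 = \|x_0 - x_0\| \geq \|x_0\| = 1$, a contradiction.

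The key step is to build an auxiliary rank-one operator that transports the right-symmetry of $T$ into information about $y$ and $x_0$. Pick $f_y \in \mathcal{J}(y)$ and set $H_y = Ker~f_y$, so that $y \perp_B H_y$ and $X = \mathbb{R}y \oplus H_y$. Define $A : X \longrightarrow Y$ by $A(\alpha y + h) = \alpha T x_0$ for $\alpha \in \mathbb{R}$ and $h \in H_y$. Since $\|\alpha y + h\| \geq |f_y(\alpha y + h)| = |\alpha|$, we obtain $\|A\| = 1$ and $y \in M_A$, and as $A$ has rank one it lies in $\mathcal{K}(X,Y)$. Because $T$ is left orthogonality preserving at $x_0$, the assumption $x_0 \perp_B y$ gives $Ay = T x_0 \perp_B Ty$, and therefore
\[
\|A + \lambda T\| \geq \|Ay + \lambda Ty\| \geq \|Ay\| = 1 = \|A\|
\]
for every $\lambda \in \mathbb{R}$. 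Hence $A \perp_B T$.

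Applying the right-symmetry of $T$ yields $T \perp_B A$. Invoking the characterization of Birkhoff-James orthogonality for compact operators on reflexive Banach spaces \cite[Theorem 2.1]{PSGD} together with $M_T = \{\pm x_0\}$, this forces $T x_0 \perp_B A x_0$. Writing $x_0 = \alpha_0 y + h_0$ with $\alpha_0 = f_y(x_0)$ and $h_0 \in H_y$, we compute $A x_0 = \alpha_0 T x_0$. Thus $T x_0 \perp_B \alpha_0 T x_0$, which forces $\alpha_0 = 0$ (else $\lambda = -1/\alpha_0$ yields $0 \geq 1$). Therefore $x_0 = h_0 \in H_y$, and $y \perp_B H_y$ delivers $y \perp_B x_0$, completing the proof.

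The main delicate point is the operator-level step: to convert $A \perp_B T$ through right-symmetry into an assertion about the norm attainers of $T$, we need both reflexivity of $X$ and the compactness of $A$ and $T$, so that \cite[Theorem 2.1]{PSGD} is applicable and $M_T = \{\pm x_0\}$ pins down the witness to be $\pm x_0$. The construction of $A$ is tailored exactly so that $Ax_0$ becomes a scalar multiple of $T x_0$, which is what lets us extract the arithmetic consequence $\alpha_0 = 0$ from the operator-level orthogonality.
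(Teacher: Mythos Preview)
Your proof is correct and follows essentially the same route as the paper: you construct the same rank-one operator $A(\alpha y+h)=\alpha Tx_0$ on a hyperplane $H_y$ with $y\perp_B H_y$, use the left orthogonality preserving property to get $A\perp_B T$, apply right-symmetry together with \cite[Theorem 2.1]{PSGD} to obtain $Tx_0\perp_B Ax_0$, and read off the conclusion. The only cosmetic difference is that the paper argues by contradiction (it starts from a $y$ with $y\not\perp_B x_0$, so $x_0\notin H_y$, and then derives $Tx_0\perp_B Tx_0$), whereas you argue directly by showing $f_y(x_0)=0$ for your chosen $f_y\in\mathcal{J}(y)$.
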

     
\begin{proof}
   Let $x_0$ is not left-symmetric. Then we can find $y\in S_{X}$ be such that $x_0\perp_B y$ and $y\not\perp_B x_0$. Since $T$ is left orthogonality preserving at $x_0$, we get $Tx_0\perp_B Ty$. 
   
   Let $H_y$ be a hyperplane such that $y\perp_B H_y$. Then $x_0\not\in H_y$. Define a linear operator $A:{X}\longrightarrow {Y}$ by
   $$A(\alpha y+h)=\alpha Tx_0$$
   for all $\alpha\in\mathbb{R}$ and $h\in H_y$. 
   
   Clearly, $A$ is a compact operator and it follows from our choice of $H_y$ that $y\in M_A$. Now, by using  $Tx_0\perp_B Ty$ we get $Ay\perp_B Ty$. Since $y\in M_A$, thus $A\perp_B T$. Now, from the right-symmetry of $T$, we have $T\perp_B A$. Thus, it follows from \cite[Theorem 2.1]{PSGD} that $Tx_0\perp_B Ax_0$. But the definition of $A$ and $x_0\not\in H_y$ implies that $Tx_0\not\perp_B Ax_0$, a contradiction.
\end{proof}

It is known that  $\ell_1^n$, $n\geq 3$, and $\ell_1$ fail to have any non-zero left-symmetric point (for details see \cite{BRS},\cite{D}). Also, in \cite{D}, many other examples of Banach spaces that fail to have any non-zero left-symmetric point are constructed. For such spaces, we have the following immediate consequence of Proposition~\ref{lem2}

\begin{corollary}
     Let $X$ be a reflexive Banach space such that $S_X$ has no left-symmetric points. Let $Y$ be a normed linear and $T\in S_{\mathcal{K}({X,Y})}$ such that $M_T=\{\pm x_0\}$ for some $x_0\in {S_{{X}}}$. If $T$ is left orthogonality preserving at $x_0$ then $T$ is not right-symmetric.
\end{corollary}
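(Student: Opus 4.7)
The plan is to derive the corollary as an immediate contrapositive application of Proposition~\ref{lem2}. I would argue by contradiction: suppose that $T$ is right-symmetric. All hypotheses needed to apply Proposition~\ref{lem2} are present in the statement of the corollary, namely that $X$ is a reflexive Banach space, $Y$ is a normed linear space, $T\in S_{\mathcal{K}(X,Y)}$ with $M_T = \{\pm x_0\}$ for some $x_0\in S_X$, and $T$ is left orthogonality preserving at $x_0$. Invoking the proposition then forces $x_0$ to be a left-symmetric point of $X$.

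This directly contradicts the standing hypothesis that $S_X$ contains no left-symmetric points, because $x_0\in S_X$ is nonzero by assumption. Hence the supposition that $T$ is right-symmetric cannot hold, which proves the corollary. Equivalently, one may read the argument as the contrapositive of Proposition~\ref{lem2}: since no element of $S_X$ is left-symmetric, $x_0$ in particular is not left-symmetric, and so the proposition rules out the right-symmetry of $T$.

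There is essentially no technical obstacle in carrying out this proof; its entire content is packaged inside Proposition~\ref{lem2}. The only minor point worth checking is the convention that the ``left-symmetric'' property in Proposition~\ref{lem2} refers to left-symmetry in the sense defined in the excerpt, which indeed pertains to nonzero vectors, matching the form in which the hypothesis ``$S_X$ has no left-symmetric points'' is used here (since $x_0\in S_X$ is automatically nonzero). The practical significance of the corollary is illustrated by the examples mentioned just before the statement, such as $\ell_1^n$ for $n\geq 3$ and $\ell_1$, where the hypothesis on $S_X$ is known to hold, so that every compact operator meeting the structural conditions on $M_T$ and local orthogonality preservation automatically fails to be right-symmetric.
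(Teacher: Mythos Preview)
Your proposal is correct and matches the paper's approach: the corollary is stated there as an ``immediate consequence of Proposition~\ref{lem2}'' with no further proof given, and your contrapositive reading of that proposition is exactly what is intended.
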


The following result follows by using \cite[Theorem 2.4]{SJMAA} and Proposition~\ref{lem2}.

\begin{corollary}\label{corsmooth}
     Let $X$ be a smooth and reflexive Banach space. Let $T\in S_{\mathcal{K}({X})}$ be such that $M_T=\{\pm x_0\}$ for some $x_0\in {S_{{X}}}$. If $T$ is right-symmetric, then $x_0$ is left-symmetric.
\end{corollary}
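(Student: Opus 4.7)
The plan is to reduce this corollary immediately to Proposition~\ref{lem2}, using the smoothness of $X$ to supply the one missing hypothesis there, namely that $T$ is left orthogonality preserving at $x_0$. With $Y=X$, the assumptions of Proposition~\ref{lem2} already match those of this corollary except for the left orthogonality preserving condition, so the only real work is to verify that condition for free from the smoothness hypothesis.

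To obtain this, I would invoke the observation the author already records right after Definition~1.2: when $X$ is smooth, \cite[Theorem 2.4]{SJMAA} guarantees that every $T\in\mathcal{B}(X)$ is left orthogonality preserving at every $x\in M_T$. Since our hypothesis gives $x_0\in M_T\subseteq S_X$ and $X$ is smooth, this applies directly to $T$ at $x_0$. (One can also see this conceptually: smoothness of $x_0$ makes $\mathcal{J}(x_0)$ a singleton $\{f\}$, and for $x_0\in M_T$ the functional $f\circ T^{-1}|_{Tx_0}$, or equivalently a Hahn--Banach extension via $T$, witnesses $Tx_0\perp_B Ty$ whenever $x_0\perp_B y$; but quoting the cited theorem suffices.)

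Having established that $T$ is left orthogonality preserving at $x_0$, I would then apply Proposition~\ref{lem2} to $T\in S_{\mathcal{K}(X,X)}$ with $M_T=\{\pm x_0\}$. The right-symmetry hypothesis on $T$ carries over unchanged, and the conclusion of Proposition~\ref{lem2} is precisely that $x_0$ is left-symmetric, which is what we want.

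There is no substantive obstacle in this argument: the entire content of the corollary is packaging the smoothness of $X$ into the left orthogonality preserving hypothesis of Proposition~\ref{lem2}. The only point worth double-checking is that \cite[Theorem 2.4]{SJMAA} is applicable in the compact setting and for the codomain equal to $X$ itself (it is, since $\mathcal{K}(X)\subset\mathcal{B}(X)$ and the cited statement is purely about operators on a smooth Banach space), after which the corollary drops out in one line.
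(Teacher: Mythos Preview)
Your proposal is correct and matches the paper's own argument essentially verbatim: the paper states that the corollary ``follows by using \cite[Theorem 2.4]{SJMAA} and Proposition~\ref{lem2},'' which is precisely the reduction you describe. The only detail worth noting is that \cite[Theorem 2.4]{SJMAA} needs both $x_0$ and $Tx_0$ to be smooth, but this is automatic here since $X$ is smooth and $T\in\mathcal{K}(X)$, so $Tx_0\in X$.
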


 We now study the right-symmetry of a smooth compact operator $T$, which preserves left orthogonality at the norm attainment element of $T$.

\begin{theorem}\label{thmrank1}
   Let $X$ be a reflexive Banach space. Let $Y$ be a smooth normed linear space and $T\in S_{\mathcal{K}({X,Y})}$ such that $M_T=\{\pm x_0\}$ for some $x_0\in {S_X}$.  If $T$ is left orthogonality preserving at $x_0$ then either of the following is true.
   \begin{itemize}
       \item [(i)] $dim~T(X)=1$.
       \item[(ii)] $T$ is not right-symmetric.
   \end{itemize}
  
\end{theorem}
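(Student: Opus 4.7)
The plan is to assume the conclusion fails, so $T$ is right-symmetric and $\dim T(X)\ge 2$, and then construct a rank-one compact operator $A\in\mathcal{K}(X,Y)$ that violates right-symmetry via Sain's characterization \cite[Theorem 2.1]{PSGD}. Proposition~\ref{lem2} together with Lemma~\ref{lemmasmoothness}(i) (using that $Tx_0$ is smooth because $Y$ is smooth) yields that $x_0$ is smooth and left-symmetric; writing $f_0\in S_{X^*}$ and $g\in S_{Y^*}$ for the unique supporting functionals at $x_0$ and $Tx_0$ and setting $H_{x_0}=\ker f_0$, the smoothness of $x_0$ combined with left orthogonality preservation forces $g\circ T=f_0$, hence $T(H_{x_0})\subseteq\ker g$. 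Under $\dim T(X)\ge 2$ this inclusion combined with $Tx_0\notin\ker g$ forces $T|_{H_{x_0}}\neq 0$, so I fix $y_0\in H_{x_0}$ with $\|y_0\|=1$ and $Ty_0\neq 0$.

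For a small parameter $\varepsilon>0$ set $\widetilde z_0=x_0+\varepsilon y_0$ and $z_0=\widetilde z_0/\|\widetilde z_0\|$. Since $x_0\perp_B y_0$ we have $\|\widetilde z_0\|\ge 1$, so the convex function $\mu\mapsto\|\widetilde z_0+\mu x_0\|$ takes value $\varepsilon<1\le\|\widetilde z_0\|$ at $\mu=-1$ and thus is not minimized at $\mu=0$; hence $z_0\not\perp_B x_0$, and by \cite[Theorem 2.1]{James2} every $\phi\in\mathcal{J}(z_0)$ satisfies $\phi(x_0)\neq 0$. Also $Tz_0\notin\mathbb{R}Tx_0$, because $\varepsilon Ty_0\in\ker g\setminus\{0\}$ while $Tx_0\notin\ker g$.

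The main technical step is to produce a unit vector $v\in Y$ with both $v\perp_B Tz_0$ and $g(v)\neq 0$. I work inside the two-dimensional smooth subspace $W=\operatorname{span}\{Tx_0,Ty_0\}$ of $Y$: the Birkhoff--James orthogonal complement of the nonzero vector $Tz_0$ inside the two-dimensional smooth Banach space $W$ is one-dimensional, so spanning it by a unit vector $v\in S_W$ gives $v\perp_B Tz_0$. The equality $g(v)=0$ would force $v\in\mathbb{R}Ty_0$ (since $\ker(g|_W)=\mathbb{R}Ty_0$), equivalently $Ty_0\perp_B Tz_0$; by smoothness of $Y$ at $Ty_0$ this reduces to the single equation $f_{Ty_0}(Tx_0)+\varepsilon\|Ty_0\|=0$ in $\varepsilon$, an exceptional value that is avoided by the choice of $\varepsilon$.

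Finally, pick any $\phi\in\mathcal{J}(z_0)$ (nonempty by Hahn--Banach) and define $A\in\mathcal{K}(X,Y)$ by $Az=\phi(z)v$, a rank-one operator with $\|A\|=1$ and $z_0\in M_A$. Since $Az_0=v\perp_B Tz_0$, Sain's characterization gives $A\perp_B T$; since $g(Ax_0)=\phi(x_0)g(v)\neq 0$, we get $Tx_0\not\perp_B Ax_0$, and the same characterization applied to $T$ with $M_T=\{\pm x_0\}$ gives $T\not\perp_B A$. This contradicts the right-symmetry of $T$, forcing $\dim T(X)=1$.
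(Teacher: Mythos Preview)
Your proof is correct and follows essentially the same strategy as the paper's: perturb $x_0$ along an orthogonal direction $y_0$ with $Ty_0\neq 0$ to get $z_0$, build a rank-one operator $A$ with $z_0\in M_A$ and $Az_0\perp_B Tz_0$, then derive $A\perp_B T$ while $T\not\perp_B A$ via $Tx_0\not\perp_B Ax_0$ and Sain's characterization. Your operator $Az=\phi(z)v$ with $\phi\in\mathcal{J}(z_0)$ is exactly the paper's $A(\gamma z+h)=\gamma(aTz+Tx_0)$ written in functional form (with $H_z=\ker\phi$ and $v$ a normalization of $aTz+Tx_0$).

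The one substantive difference is how you exclude the degenerate case $v\in\mathbb{R}Ty_0$ (equivalently $Ty_0\perp_B Tz_0$). The paper fixes the coefficient at $1/2$ and then appeals to \cite[Proposition~3.2]{PMW} to get that $Tx_0$ is left-symmetric, whence $Ty_0\perp_B Tx_0$, which combined with $Ty_0\perp_B Tz_0$ and smoothness yields $Ty_0\perp_B Ty_0$. You instead keep the coefficient $\varepsilon$ free and simply avoid the single value solving $f_{Ty_0}(Tx_0)+\varepsilon\|Ty_0\|=0$; this is a little cleaner and removes the dependence on the external left-symmetry result. One minor inaccuracy: without strict convexity the set $\{w\in S_W:w\perp_B Tz_0\}$ need not be exactly a pair $\{\pm v\}$, but your argument does not actually rely on that uniqueness---you only use that \emph{any} such $v$ with $g(v)=0$ would lie in $\mathbb{R}Ty_0$, which is correct since $\ker(g|_W)=\mathbb{R}Ty_0$.
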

\begin{proof}
   If $dim~T(X)=1$, then (i) follows. 
   
   We now assume that $T\in S_{\mathcal{K}({X,Y})}$ is right-symmetric with $M_T=\{\pm x_0\}$ for some $x_0\in {S_{{X}}}$ and $dim~T(X)>1$.  

    We first claim that there exists an element $y\in S_X$ with $x_0\perp_B y$, and $Tx_0, Ty$ are linearly independent. Let $H$ be a hyperplane such that $x_0\perp_B H$ and $Ty\in span\{Tx_0\}$ for all $y\in H$. Then $T(ax_0+y)=aT(x_0)+T(y)\in span\{Tx_0\}$ for all $y\in H$ and $a\in \mathbb{R}$. This shows that $dim~T(X)=1$, a contradiction. Thus, our claim follows. 

    Let $y_0\in S_X$ be such that $x_0\perp_B y_0$ and $Tx_0, Ty_0$ are linearly independent.
   
   It follows from Lemma~\ref{lemmasmoothness} that $x_0$ is smooth. Now, smoothness of $x_0$ implies that $x_0\not\perp_B x_0+\frac{y_0}{2}$. Let $z=\frac{x_0+\frac{y_0}{2}}{\|x_0+\frac{y_0}{2}\|}$ then $x_0\not\perp_B z$. Also, from our choice of $z$, it follows that $Tz\not=0$.

    Now, $x_0\in M_T$ and $x_0\not\perp_B z$ implies that $Tx_0\not\perp_B Tz$. Let $0\not=a\in\mathbb{R}$ be such that $aTz+Tx_0\perp_B Tz$. We now show that $a\not=-{\|x_0+\frac{y_0}{2}\|}$. If $a=-{\|x_0+\frac{y_0}{2}\|}$ then $Ty_0\perp_B Tz$. This gives $Ty_0\perp_B Tx_0+\frac{1}{2}Ty_0$. Also, from our assumption of $T$ and our choice of $y_0$, it follows that $Tx_0\perp_B Ty_0$. Now, \cite[Proposition 3.2]{PMW} implies that $Tx_0$ is left-symmetric and we get $Ty_0\perp_B Tx_0$. Thus, using the smoothness of $Y$, we get $Ty_0\perp_B Ty_0$, a contradiction. 
    
    Let $H_z$ be a hyperplane such that $ z\perp_B H_z$. Define a map $A:{X}\longrightarrow {Y}$ by
   $$A(\gamma z+h)=\gamma (a Tz+Tx_0)$$
   for all $\gamma\in\mathbb{R}$ and $h\in H_z$. 
   Then clearly $A\in \mathcal{K}(X,Y)$ and $z\in M_A$. By our choice of $a$, we have $a Tz+Tx_0\perp_B Tz$. Thus, $Az=a Tz+Tx_0$ and $z\in M_A$ implies that $A\perp_B T$.
    Using \cite[Theorem 2.1]{SPLAA} and right-symmetry of $T$ we get $Tx_0\perp_B Ax_0$.  
   
   We now claim that $x_0\not\in H_z$. If $x_0\in H_z$ then $z\perp_B x_0$. This shows that $x_0+\frac{1}{2}y_0\perp_B x_0$. Now, by using $x_0+\frac{1}{2}y_0\perp_B x_0$ and $x_0\perp_B y_0$  we get 
   
   $$\frac{1}{2}=\frac{1}{2}\|y_0\|=\|(x_0+\frac{1}{2}y_0)-x_0\|\geq \|x_0+\frac{1}{2}y_0\|\geq \|x_0\|.$$
   But this contradicts $\|x_0\|=1$. Thus, $x_0\not\perp_B H_z$.
   
   Let $0\not=a_{x_0}\in\mathbb{R}$ and $0\not=h_{x_0}\in H_z$ be such that $x_0=a_{x_0}z+h_{x_0}$. Then $Ax_0=a_{x_0} (aTz+Tx_0)$. Thus, $Tx_0\perp_B Ax_0$ implies that $Tx_0\perp_B a_{x_0} (a Tz+Tx_0)$.  By using smoothness of $T{x_0}$, $a\not=-{\|x_0+\frac{y_0}{2}\|}$, $Tx_0\perp_B Ty_0$ and $z=\frac{x_0+\frac{y_0}{2}}{\|x_0+\frac{y_0}{2}\|}$, we get $Tx_0\perp_B Tx_0$, a contradiction. This proves the result.   
   \end{proof}

The following result is an immediate consequence of \cite[Theorem 2.4]{SJMAA} and Theorem~\ref{thmrank1}.
\begin{corollary}\label{Rank1}
    Let $X$ be a smooth Banach and reflexive space.  Let $T\in S_{\mathcal{K}({X})}$ be a smooth operator.  Then, either of the following is true:
   \begin{itemize}
       \item [(i)] $dim~T(X)=1$.
       \item[(ii)] $T$ is not right-symmetric.
        \end{itemize}
\end{corollary}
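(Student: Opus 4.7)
The plan is to reduce this corollary directly to Theorem~\ref{thmrank1} by verifying its two structural hypotheses, namely that the norm attainment set of $T$ consists of a single pair $\{\pm x_0\}$ and that $T$ is left orthogonality preserving at $x_0$. The codomain requirement in Theorem~\ref{thmrank1} is automatic, since here $Y = X$ is smooth by assumption, and $X$ is reflexive as required.

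First I would use the smoothness characterization for compact operators recalled from \cite{SPMR} in the introduction: since $X$ is reflexive and $T\in S_{\mathcal{K}(X)}$ is smooth, there exists $x_0\in S_X$ such that $M_T=\{\pm x_0\}$ and $Tx_0$ is a smooth point of $X$. This immediately gives one of the two hypotheses needed to invoke Theorem~\ref{thmrank1}.

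Next I would obtain left orthogonality preservation at $x_0$. Here the key tool is \cite[Theorem 2.4]{SJMAA}, quoted in the introduction in the form: if $X$ is smooth and $T\in\mathcal{B}(X)$ attains its norm at $x\in M_T$, then $T$ is left orthogonality preserving at $x$. Since $X$ is smooth and $x_0\in M_T$, this applies verbatim to our $T$ and yields that $T$ is left orthogonality preserving at $x_0$.

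With both hypotheses in hand, I would apply Theorem~\ref{thmrank1} (with $Y=X$, which is smooth) to conclude that either $\dim T(X)=1$, giving (i), or $T$ is not right-symmetric, giving (ii). There is no real obstacle here: the work has been done in Theorem~\ref{thmrank1}, and the smoothness of $T$ together with the smoothness of $X$ is precisely what is needed to feed that theorem with the correct $x_0$ and the left orthogonality preservation property. The only mild point to check is that the quoted results from \cite{SPMR} and \cite{SJMAA} apply in the present one-space setting $\mathcal{K}(X)=\mathcal{K}(X,X)$, which is evident.
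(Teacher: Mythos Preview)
Your proposal is correct and follows exactly the route the paper indicates: the paper's proof is the single line ``immediate consequence of \cite[Theorem~2.4]{SJMAA} and Theorem~\ref{thmrank1},'' and you have simply spelled out the details, including the use of the \cite{SPMR} smoothness characterization to obtain $M_T=\{\pm x_0\}$ (which the paper leaves implicit but is indeed needed to feed Theorem~\ref{thmrank1}). There is nothing to add.
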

   
  In the following result, we study the right-symmetry of a rank one compact operator $T\in S_{\mathcal{K}({X})}$ with $M_T=\{\pm x_0\}$ for some $x_0\in {S_X}$, where $X$ is a reflexive Banach space.

\begin{proposition}
    Let $X$ be a reflexive Banach space with $dim~X\geq 2$. Let $T\in S_{\mathcal{K}({X})}$ such that $M_T=\{\pm x_0\}$ for some $x_0\in {S_X}$ and $dim~T(X)=1$. Let $T$ be left orthogonality preserving at $x_0$ and $Tx_0$ be left-symmetric. If $Tx_0\not\in Ker~T$, then $T$ is not right-symmetric.
\end{proposition}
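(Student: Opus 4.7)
The plan is to exhibit the identity $A = I_X$ as a witness that $T$ is not right-symmetric, by checking $I_X \perp_B T$ while $T \not\perp_B I_X$.

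First, $I_X \perp_B T$ follows at once. Since $T$ has rank one and $\dim X \geq 2$, the kernel $Ker~T$ is a closed hyperplane, so one can pick $z_0 \in S_X \cap Ker~T$; then $(I_X + \lambda T)(z_0) = z_0$ for every $\lambda \in \mathbb{R}$, giving $\|I_X + \lambda T\| \geq 1 = \|I_X\|$.

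Next, I would derive a contradiction from assuming $T \perp_B I_X$. By the Bhatia--\v{S}emrl type characterization \cite[Theorem 2.1]{SPLAA}, the hypothesis $T \perp_B I_X$ yields some $x \in M_T = \{\pm x_0\}$ with $Tx \perp_B I_X(x)$; writing $w := Tx_0$, this reads $w \perp_B x_0$. Left-symmetry of $w$ gives $x_0 \perp_B w$, and the left orthogonality preserving property of $T$ at $x_0$ then gives $Tx_0 \perp_B Tw$, i.e., $w \perp_B Tw$. Since $\dim T(X) = 1$ and $0 \neq w \in T(X)$, we have $T(X) = \mathrm{span}\{w\}$, so $Tw = cw$ for some $c \in \mathbb{R}$. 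The relation $w \perp_B cw$ with $w \neq 0$ forces $c = 0$, hence $Tw = 0$, contradicting $Tx_0 \notin Ker~T$. Thus $T \not\perp_B I_X$, and together with $I_X \perp_B T$ this shows $T$ is not right-symmetric.

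The main subtlety is applying \cite[Theorem 2.1]{SPLAA} to the pair $(T, I_X)$ when $X$ is infinite-dimensional and $I_X$ is not compact. The ``only if'' direction used here extracts a norm-attaining limit through a weak-compactness argument that relies only on compactness of $T$ and reflexivity of $X$, not on compactness of the second operator, so the invocation is legitimate. If one prefers a fully compact witness, one may attempt the rank-one operator $A(x) = \phi_0(x) x_0$ with $\phi_0 \in \mathcal{J}(z_0)$ for some $z_0 \in S_X \cap Ker~T$ chosen with $\phi_0(x_0) \neq 0$, since the contradiction step still goes through verbatim once $w \not\perp_B x_0$ is ruled out; however, such a choice of $z_0$ and $\phi_0$ does not always exist (it fails already in Hilbert space), so the identity is the more robust option.
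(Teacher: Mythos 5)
Your proof is correct and follows essentially the same route as the paper: take $A=I_X$, get $I_X\perp_B T$ from a nonzero element of $Ker\,T$, invoke right-symmetry and the Paul--Sain--Ghosh characterization to obtain $Tx_0\perp_B x_0$, then use left-symmetry of $Tx_0$ and the left orthogonality preserving property at $x_0$ to force $T(Tx_0)=0$, contradicting $Tx_0\notin Ker\,T$. Your aside correctly notes that the characterization (as in \cite[Theorem 2.1]{PSGD}) needs only $T$ compact and the second operator bounded, so applying it with $A=I_X$ is legitimate even though $I_X$ is not compact.
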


\begin{proof}
  Let $T$ be right-symmetric. Let $T(X)=\mbox{span} \{Tx_0\}$. Let $Tx_0\not\in Ker~T$. Clearly, $dim~X\geq 2$ implies that there exists $0\not=h\in Ker~T$. This gives $I\perp_B T$ and thus right-symmetry of $T$ implies that $T\perp_B I$. Now, it follows from \cite[Theorem 2.1]{PSGD} that $Tx_0\perp_B x_0$ and thus left-symmetry of $Tx_0$ implies that $x_0\perp_B Tx_0$. By using the left orthogonality preserving property of $T$ at $x_0$, we get $Tx_0\perp_B T(Tx_0)$, a contradiction. Thus, the result follows.
\end{proof}

   It follows from \cite[Proposition 3.2]{PMW} that if $T$ is a right-symmetric compact operator on a reflexive and smooth Banach $X$ space with $M_T=\{\pm x_0\}$, where $x_0\in S_X$ then $Tx_0$ is left-symmetric. Now, the following result follows immediately by using \cite[Theorem 2.4]{SJMAA}.
\begin{corollary}\label{corkernel}
    Let $X$ be a smooth and reflexive Banach space with $dim~X\geq 2$. Let $T\in S_{\mathcal{K}({X})}$ such that $M_T=\{\pm x_0\}$ for some $x_0\in {S_X}$ and $dim~T(X)=1$. If $Tx_0\not\in Ker~T$, then $T$ is not right-symmetric.
\end{corollary}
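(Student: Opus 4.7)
The plan is to reduce this corollary to the immediately preceding Proposition (the rank-one criterion) by verifying its two extra hypotheses under the stronger assumption of smoothness of $X$. The preceding Proposition assumes that (a) $T$ is left orthogonality preserving at $x_0$ and (b) $Tx_0$ is left-symmetric, and concludes that if $Tx_0\notin\mathrm{Ker}\,T$ then $T$ is not right-symmetric. So the task reduces to obtaining (a) and (b) for free from smoothness of $X$ and the assumption of right-symmetry.

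For (a), I would argue as follows. Since $X$ is smooth and $x_0\in M_T$, the result \cite[Theorem 2.4]{SJMAA} (already invoked earlier in the paper as one of the standard examples of locally left-orthogonality-preserving operators) implies that $T$ is left orthogonality preserving at $x_0$. No calculation is needed; this is a direct citation.

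For (b), I would proceed by contradiction. Suppose $T$ is right-symmetric. Since $X$ is smooth and reflexive and $M_T=\{\pm x_0\}$, the result \cite[Proposition 3.2]{PMW} (explicitly recalled just above the statement) yields that $Tx_0$ is left-symmetric in $X$. Combined with (a) and the hypothesis $Tx_0\notin\mathrm{Ker}\,T$, all the assumptions of the preceding Proposition are met, so the preceding Proposition forces $T$ to be not right-symmetric, contradicting our assumption. This contradiction establishes the corollary.

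I do not expect any substantive obstacle: the corollary is essentially a bookkeeping consequence where the ``hard work'' is already contained in the preceding Proposition together with the two imported results \cite[Theorem 2.4]{SJMAA} and \cite[Proposition 3.2]{PMW}. The only thing to be careful about is that the smoothness hypothesis on $X$ is used twice, once to guarantee that $T$ is automatically left orthogonality preserving at $x_0$, and once (together with reflexivity and $M_T=\{\pm x_0\}$) to guarantee that right-symmetry of $T$ forces left-symmetry of $Tx_0$. Both uses are clean invocations of previously stated results, so the proof should be short.
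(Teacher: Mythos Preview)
Your proposal is correct and matches the paper's approach exactly: the paper states just before the corollary that \cite[Proposition 3.2]{PMW} yields left-symmetry of $Tx_0$ whenever $T$ is right-symmetric, and that the corollary then follows immediately by \cite[Theorem 2.4]{SJMAA} (giving left orthogonality preservation at $x_0$) together with the preceding Proposition. Your contradiction framing is a harmless repackaging of the same two citations plus the same reduction.
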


In \cite[Theorem 2.4]{SGP} and \cite[Theorem 3.4]{PMW} the authors studied the right-symmetry of a smooth compact operator $T\in S_{\mathcal{B}(X)}$ when $x_0$ is an eigen vector for $T$, where $M_T=\{\pm x_0\}$ for some $x_0\in S_X$. In this direction, the following result follows immediately from Corollaries~\ref{Rank1} and~\ref{corkernel}.
\begin{corollary}
    Let $X$ be a smooth and reflexive Banach space with $dim~X\geq 2$. Let $T\in S_{\mathcal{K}({X})}$ such that $M_T=\{\pm x_0\}$ for some $x_0\in {S_X}$. If $x_0$ is an eigen-vector for $T$, then $T$ is not right-symmetric.
\end{corollary}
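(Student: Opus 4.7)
The plan is to argue by contradiction, splitting on the rank of $T$, and in each case applying one of the two preceding corollaries. Suppose $T$ is right-symmetric. Since $x_0$ is a unit eigenvector with $Tx_0$ also a unit vector (as $x_0\in M_T$ and $\|T\|=1$), the corresponding eigenvalue $\lambda$ satisfies $|\lambda|=1$, so $Tx_0=\pm x_0$. This one identity is the entire engine of the argument; everything else is packaging.

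First I would dispose of the case $\dim T(X)\geq 2$. Since $X$ is smooth, every nonzero vector in $X$ is a smooth point; in particular $Tx_0=\pm x_0$ is smooth in $X$. Combined with $M_T=\{\pm x_0\}$, the smoothness characterization recalled just before Lemma~\ref{lem1} (i.e.\ the result from \cite{SPMR}) shows that $T$ itself is a smooth operator. Corollary~\ref{Rank1} then forces either $\dim T(X)=1$ or $T$ not right-symmetric; the first is excluded by assumption and the second contradicts the hypothesis. So this case is done.

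Next I would handle $\dim T(X)=1$. Here I want to apply Corollary~\ref{corkernel}, which requires $Tx_0\notin\ker T$. But $Tx_0=\pm x_0$ gives $T(Tx_0)=\pm Tx_0=\pm(\pm x_0)$, whose norm is $1$, so $Tx_0\notin\ker T$. Corollary~\ref{corkernel} then yields that $T$ is not right-symmetric, again contradicting our assumption. Combining the two cases finishes the proof.

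There is essentially no obstacle: once one notes that $|\lambda|=1$ and therefore $Tx_0=\pm x_0$, the eigenvector hypothesis simultaneously guarantees smoothness of $Tx_0$ (feeding Corollary~\ref{Rank1}) and non-membership in $\ker T$ (feeding Corollary~\ref{corkernel}), so the two corollaries cover the two possible rank regimes exactly. The only mild care needed is verifying that the ambient smoothness of $X$ does deliver smoothness of the distinguished vector $Tx_0$, so that the smooth-operator characterization applies.
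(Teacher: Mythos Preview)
Your proof is correct and follows exactly the route the paper intends: after observing $Tx_0=\pm x_0$, you verify that $T$ is smooth (via the \cite{SPMR} characterization, since $X$ smooth makes $Tx_0$ smooth and $M_T=\{\pm x_0\}$ is given), so Corollary~\ref{Rank1} reduces matters to the rank-one case, and there the eigenvector condition forces $Tx_0\notin\ker T$, allowing Corollary~\ref{corkernel} to finish. The paper simply asserts that the result follows immediately from Corollaries~\ref{Rank1} and~\ref{corkernel}, and you have correctly supplied the connecting details.
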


The following results follow immediately by using  \cite[Proposition 3.2]{PMW} and Proposition~\ref{lem2}.

\begin{theorem}
     Let $X$ be a reflexive Banach space. Let $Y$ be a normed linear space and $T\in S_{\mathcal{K}(X,Y)}$ be right-symmetric such that $M_T=\{\pm x_0\}$ for some $x_0\in  S_X$. Let $T$ be left orthogonality preserving at $x_0$. If $x_0$ is an eigenvector for $T$, then $x_0$ is symmetric.
\end{theorem}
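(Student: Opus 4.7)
The strategy splits according to the two halves of ``symmetric''. Left-symmetry of $x_0$ is handed over directly by Proposition~\ref{lem2}, since $T$ is right-symmetric, left orthogonality preserving at $x_0$, and has $M_T=\{\pm x_0\}$ with $X$ reflexive. The work is therefore to establish right-symmetry of $x_0$.

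For that, I would argue by contradiction: suppose some $y\in S_X$ satisfies $y\perp_B x_0$ but $x_0\not\perp_B y$. Using the eigenvector hypothesis together with $x_0\in M_T$, write $Tx_0=\lambda x_0$ with $|\lambda|=\|T\|=1$. Choose $g\in\mathcal{J}(x_0)$ via the Hahn-Banach theorem and define the rank-one, hence compact, operator $A\in\mathcal{K}(X)$ by $Au=g(u)y$. Then $\|A\|=1$ and $Ax_0=y$, so $x_0\in M_A$. For every $\mu\in\mathbb{R}$,
$$
\|A+\mu T\|\ \geq\ \|Ax_0+\mu Tx_0\|\ =\ \|y+\mu\lambda x_0\|\ \geq\ \|y\|\ =\ \|A\|,
$$
where the last inequality is exactly $y\perp_B x_0$ applied with scalar $\mu\lambda$; hence $A\perp_B T$.

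Right-symmetry of $T$ then yields $T\perp_B A$, and the Sain-type characterization of Birkhoff-James orthogonality for compact operators on a reflexive Banach space (\cite[Theorem 2.1]{SPLAA}, as already invoked in the proof of Theorem~\ref{thmrank1}) produces some $v\in M_T=\{\pm x_0\}$ with $Tv\perp_B Av$. This collapses to $\lambda x_0\perp_B y$, i.e.\ $x_0\perp_B y$, contradicting the choice of $y$ and completing the proof that $x_0$ is right-symmetric.

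The main obstacle is engineering $A$ so that all three requirements hold simultaneously: $A$ must be compact (so the Sain-type characterization applies to $T\perp_B A$), $x_0$ must lie in $M_A$ (so that the $v$ produced by Sain is forced to be $\pm x_0$ and $Av=\pm y$), and $Ax_0$ must equal $y$ while $Tx_0\in\mbox{span}\{x_0\}$ (so that $y\perp_B x_0$ is exactly the assertion $Ax_0\perp_B Tx_0$ needed for $A\perp_B T$). The rank-one operator $g\otimes y$ with $g\in\mathcal{J}(x_0)$ meets all three demands at once, and the eigenvector hypothesis is precisely what lets $y\perp_B x_0$ do double duty as $y\perp_B Tx_0$.
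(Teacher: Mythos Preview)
Your argument is correct. For left-symmetry you invoke Proposition~\ref{lem2}, exactly as the paper does. For right-symmetry the paper does not argue at all: it simply records that the theorem ``follows immediately by using \cite[Proposition 3.2]{PMW} and Proposition~\ref{lem2}''. That external proposition (Paul--Mal--W\'{o}jcik) establishes, for a right-symmetric $T\in\mathcal{K}(X,Y)$ with $X$ reflexive and $M_T=\{\pm x_0\}$, that $x_0$ is right-symmetric in $X$ and $Tx_0$ is left-symmetric in $Y$; the paper is drawing on the first of these conclusions here. Your route is genuinely different: instead of citing that black box you build the rank-one operator $A=g\otimes y$ with $g\in\mathcal{J}(x_0)$, and the eigenvector identity $Tx_0=\lambda x_0$ is precisely what converts $y\perp_B x_0$ into $Ax_0\perp_B Tx_0$, hence $A\perp_B T$; right-symmetry of $T$ together with the Sain--Paul characterization then forces $x_0\perp_B y$. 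What your approach buys is a self-contained argument in which the eigenvector hypothesis visibly does the work; what the paper's citation buys is that the right-symmetry of $x_0$ actually holds without any eigenvector assumption. One minor wrinkle: you write $A\in\mathcal{K}(X)$, but to form $A+\mu T$ and speak of $T\perp_B A$ one needs $A\in\mathcal{K}(X,Y)$, i.e.\ $y\in Y$; this is harmless once one observes that the eigenvector hypothesis already forces the setting to be, in effect, $X=Y$.
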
\label{propeigen}

\section{Approximate orthogonality preserving and reversing operators}\label{section 3}
We begin this section by proving that any $\varepsilon$-DAOP operator $T\in S_{\mathcal{B}(X,Y)}$ is one-to-one, where $X,Y$ are normed linear spaces and $\varepsilon\in[0,1)$.
\begin{theorem}\label{injective}
    Let $X, Y$ be normed linear spaces. Let $T\in S_{\mathcal{B}(X,Y)}$ such that $T$ is $\varepsilon$-DAOP for some $\varepsilon\in[0,1)$. Then $T$ is one-to-one.
\end{theorem}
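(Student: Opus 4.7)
The plan is to argue by contradiction: I will assume the existence of a unit vector $x_0 \in \ker T$ and derive a contradiction by forcing the function $\tau \mapsto \|u + \tau x_0\|$, for a suitable $u \in X$ with $Tu \neq 0$, to be simultaneously constant and unbounded. The existence of such a $u$ is automatic from $\|T\| = 1 \neq 0$, and the key observation is that $T(u + \tau x_0) = Tu$ for every $\tau \in \mathbb{R}$ (so in particular $u + \tau x_0 \neq 0$ for every $\tau$).

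The main step is to show $u + \tau x_0 \perp_B x_0$ for every $\tau \in \mathbb{R}$. To achieve this, I would invoke the classical theorem of James: for any nonzero $x \in X$ and any $y \in X$ there exists $a \in \mathbb{R}$ with $x \perp_B ax + y$. Applied with $x = u + \tau x_0$ and $y = x_0$, it yields $a_\tau \in \mathbb{R}$ such that $u + \tau x_0 \perp_B a_\tau(u + \tau x_0) + x_0$. Applying the $\varepsilon$-DAOP hypothesis and using $Tx_0 = 0$, this becomes $Tu \perp_D^{\varepsilon} a_\tau Tu$. By the characterization (\ref{Dchar}), there exists $f \in S_{Y^*}$ with $|f(Tu)| \geq \sqrt{1-\varepsilon^2}\|Tu\| > 0$ and $a_\tau f(Tu) = 0$, whence $a_\tau = 0$. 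Thus $u + \tau x_0 \perp_B x_0$ for every $\tau \in \mathbb{R}$.

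To close the argument, the Birkhoff--James relation above gives $\|u + (\tau + s) x_0\| \geq \|u + \tau x_0\|$ for all $s,\tau \in \mathbb{R}$, so the function $\varphi(\tau) := \|u + \tau x_0\|$ attains its global minimum at every $\tau$ and must therefore be constant. But the reverse triangle inequality gives $\varphi(\tau) \geq |\tau|\|x_0\| - \|u\| = |\tau| - \|u\| \to \infty$ as $|\tau| \to \infty$, a contradiction. Hence $\ker T = \{0\}$ and $T$ is injective.

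The step that carries the real content is the coupling of James' theorem with $\varepsilon$-DAOP to conclude $a_\tau = 0$; once that is in hand, the reverse-triangle-inequality step is essentially automatic. The only subtlety is that $a_\tau$ is not assumed to depend nicely (continuously or uniquely) on $\tau$---we only need its existence at each fixed $\tau$, combined with the fact that $\varepsilon$-DAOP forces every such $a_\tau$ to vanish.
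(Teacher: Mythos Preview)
Your proof is correct and is in fact cleaner than the paper's. Both arguments work inside the two-dimensional subspace spanned by a unit kernel vector and some $u$ with $Tu\neq 0$, but the strategies diverge from there. The paper parametrizes the unit sphere of that plane, removes the two coordinate directions, and then runs a five-case analysis according to whether points of the remaining arc are smooth and where the kernels of their supporting functionals land; in each case it produces two linearly dependent nonzero images standing in the relation $\perp_D^{\varepsilon}$, which is impossible. Your argument bypasses this entirely: invoking James' existence result $x\perp_B(ax+y)$ at every point $x=u+\tau x_0$ and feeding it through the $\varepsilon$-DAOP hypothesis collapses the unknown coefficient $a_\tau$ to zero uniformly in $\tau$, so the whole affine line $u+\mathbb{R}x_0$ becomes Birkhoff--James orthogonal to $x_0$, forcing $\tau\mapsto\|u+\tau x_0\|$ to be simultaneously constant and unbounded. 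The trade-off is that the paper's proof is self-contained at the level of the definitions, whereas yours imports James' theorem; in exchange you avoid the case split on smoothness altogether and obtain a shorter, more conceptual argument. Note also that your method transfers verbatim to the $\varepsilon$-DAOR case (Theorem~\ref{injective2}): from $(u+\tau x_0)\perp_B(a_\tau(u+\tau x_0)+x_0)$ the DAOR hypothesis gives $a_\tau Tu\perp_D^{\varepsilon} Tu$, and if $a_\tau\neq 0$ the characterization~(\ref{Dchar}) again yields a functional $f$ with $f(Tu)=0$ and $|f(Tu)|\geq\sqrt{1-\varepsilon^2}\,|a_\tau|\,\|Tu\|>0$, a contradiction.
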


\begin{proof}
We claim that $Tx\not=0$ for all $x\in S_X$. Let $Ty=0$ for some $y\in S_X$. Since $T\not=0$, there exists $x_0\in S_X$ such that $Tx_0\not=0$. Let $X_0=\mbox{span}\{x_0,y\}$ and $S_0=S_{X_0}\setminus \{\mbox{span} \{x_0\}\cup \mbox{span}\{y\}\}$. We now consider the following cases.
    \begin{itemize}
        \item [(a)]  Let $x\in S_0$ be such that $x$ is not smooth. Then $\mathcal{J}(x)$ is not a singleton set and by using the convexity of $\mathcal{J}(x)$ we can find $f_{x}\in \mathcal{J}(x)$ such that $Ker f_{x}\cap S_0\not=\emptyset$. 
        Let $x_1\in Ker f_{x}\cap S_{0}$. Then $x\perp_B x_1$. Let $x=\alpha_1x_0+\beta_1y$ and $x_1=\alpha_2x_0+\beta_2y$, where $\alpha_1,\alpha_2,\beta_1,\beta_2\not=0$. Now, by using $T$ is $\varepsilon$-DAOP for some $\varepsilon\in[0,1)$, we get   $Tx\perp_D^{\varepsilon} Tx_1$. Thus, $\beta_1 Tx_0\perp_D^{\varepsilon}\beta_2 Tx_0$, a contradiction.
        \item [(b)]  Let each $x\in S_0$ is smooth. Let $x\in S_0$ be such that  $Ker f_{x}\cap S_0\not=\emptyset$, where $\mathcal{J}(x)=\{f_{x}\}$. In this case, we arrive at a contradiction by using arguments similar to case (a).
        \item[(c)] Let each $x\in S_0$ is smooth. Let $Kerf _{x}=\mbox{span} \{x_0\}$ for each $x\in S_0$, where $\mathcal{J}(x)=\{f_{x}\}$. Choose a sequence $\{x_n\}\subseteq X_0\setminus \{\mbox{span} \{x_0\}\cup \mbox{span} \{y\}\}$ such that $x_n\longrightarrow x_0$. Then by our assumption on $S_0$, we get $x_n\perp_B x_0$ for all $n$. This gives $x_0\perp_B x_0$, a contradiction.

        \item[(d)] Let each $x\in S_0$ is smooth. Let $Kerf _{x}=\mbox{span} \{y\}$ for each $x\in S_0$, where $\mathcal{J}(x)=\{f_{x}\}$. By using arguments similar to case (c) we arrive at a contradiction.
    \item[(e)] Let each $x\in S_0$ is smooth. Let $Ker f _{x_1}=\mbox{span} \{x_0\}$  and $Kerf _{x_2}=\mbox{span} \{y\}$ for some $x_1,x_2\in S_0$, where $\mathcal{J}(x_1)=\{f_{x_1}\}$ and $\mathcal{J}(x_2)=\{f_{x_2}\}$. Let
     $x_1=\alpha_1x_0+\beta_1y$, where $\alpha_1,\beta_1\not=0$. Then by using $T$ is $\varepsilon$-DAOP for some $\varepsilon\in[0,1)$, we get $Tx_1\perp_D^{\varepsilon} Tx_0$ . Thus, $ \beta_1Tx_0\perp_D^{\varepsilon} Tx_0$, a contradiction.

      This shows that $Tx\not=0$ for all $x\in S_X$. Thus, $T$ is one-to-one.
    \end{itemize}
\end{proof}

The following result follows using arguments similar to Theorem~\ref{injective}.

\begin{theorem}\label{injective2}
    Let $X, Y$ be normed linear spaces. Let $T\in S_{\mathcal{B}(X,Y)}$ such that $T$ is $\varepsilon$-DAOR for some $\varepsilon\in[0,1)$. Then $T$ is one-to-one.
\end{theorem}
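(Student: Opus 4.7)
The plan is to mirror the argument of Theorem~\ref{injective} almost verbatim, since that proof depends only on the two-dimensional geometry of a subspace $X_0 = \mathrm{span}\{x_0, y\}$ (where $y$ is a hypothetical unit vector in $\mathrm{Ker}\,T$ and $x_0 \in S_X$ is chosen with $Tx_0 \neq 0$) together with the characterization \eqref{Dchar} of $\perp_D^\varepsilon$. I would begin by assuming for contradiction that $T$ is not one-to-one, fixing such $y$ and $x_0$, and setting up the set $S_0 = S_{X_0} \setminus \{\mathrm{span}\{x_0\} \cup \mathrm{span}\{y\}\}$ exactly as before. The case split (a)--(e) depends only on which points of $S_0$ are smooth and on which one-dimensional subspace the kernels of supporting functionals can coincide with. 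This is purely internal to $X_0$ and has nothing to do with the direction of the approximate orthogonality relation; it therefore carries over unchanged and produces in each case a Birkhoff--James orthogonality $x \perp_B x_1$ in which both $Tx$ and $Tx_1$ are nonzero scalar multiples of $Tx_0$, courtesy of $Ty = 0$.

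The only place the proof needs editing is the invocation of the hypothesis. Under $\varepsilon$-DAOR, $x \perp_B x_1$ yields $Tx_1 \perp_D^{\varepsilon} Tx$ instead of $Tx \perp_D^{\varepsilon} Tx_1$; by \eqref{Dchar} this produces an $f \in S_{Y^*}$ with $f(Tx) = 0$ and $|f(Tx_1)| \geq \sqrt{1-\varepsilon^2}\,\|Tx_1\|$. Because $Tx$ and $Tx_1$ are nonzero scalar multiples of $Tx_0$, the vanishing of $f$ at $Tx$ forces $f(Tx_0) = 0$ and hence $f(Tx_1) = 0$, contradicting the lower bound since $\sqrt{1-\varepsilon^2}\,\|Tx_1\| > 0$. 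In case (e), where the relevant relation is $x_1 \perp_B x_0$ rather than one between two elements of $S_0$, the same mechanism applies: DAOR gives $Tx_0 \perp_D^\varepsilon Tx_1$, and any $f$ certifying this relation vanishes at $Tx_1 = \alpha_1 Tx_0$, hence at $Tx_0$, contradicting $|f(Tx_0)| \geq \sqrt{1-\varepsilon^2}\,\|Tx_0\| > 0$.

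I do not anticipate any genuine obstacle. The proof of Theorem~\ref{injective} is already arranged so that the final contradiction is symmetric in the two arguments of $\perp_D^\varepsilon$, precisely because both arguments end up collinear with $Tx_0$. Swapping the roles of preserved and reversed orthogonality simply swaps which of the two collinear vectors carries the lower bound $\sqrt{1-\varepsilon^2}\,\|\cdot\|$ and which is annihilated by $f$, but the conclusion that $f$ must vanish on the whole line $\mathrm{span}\{Tx_0\}$ is unchanged. A fully detailed proof would therefore be essentially a copy of Theorem~\ref{injective} with the direction of $\perp_D^\varepsilon$ flipped throughout, and one concludes that $Tx \neq 0$ for every $x \in S_X$, so $T$ is injective.
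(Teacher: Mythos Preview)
Your proposal is correct and is exactly the approach the paper takes: the paper's entire proof of Theorem~\ref{injective2} is the single sentence ``The following result follows using arguments similar to Theorem~\ref{injective},'' and your write-up spells out precisely those similar arguments, correctly noting that the only change is the direction of $\perp_D^\varepsilon$, which is immaterial since both $Tx$ and $Tx_1$ end up as nonzero scalar multiples of $Tx_0$.
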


In \cite[Theorem 2.3]{CKS2}, the authors proved that on a finite dimension Banach space identity operator is always $\varepsilon$-DAOR for some $\varepsilon\in [0,1)$. We now show that an operator $T\in S_{\mathcal{B}(X,Y)}$ is $\varepsilon$-DAOP (or $\varepsilon$-DAOR) if and only if $T$ is injective, where $X$ is a finite-dimensional Banach space and $Y$ is a normed linear space. Thus,\cite[Theorem 2.3]{CKS2} follows immediately by using the following result.

\begin{theorem}
   Lex $X$ be an $n$-dimensional Banach space. Let $Y$ be a normed linear space and $T\in S_{\mathcal{B}(X,Y)}.$ Then the following are equivalent:
   \begin{itemize}
       \item [(i)] $T$ is $\varepsilon_1$-DAOP for some $\varepsilon_1\in[0,1)$.
       \item [(ii)] $T$ is $\varepsilon_2$-DAOR for some $\varepsilon_2\in[0,1)$.
       \item [(iii)] $T$ is one-to-one.
    \end{itemize}
\end{theorem}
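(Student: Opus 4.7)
The plan is to observe that Theorems~\ref{injective} and~\ref{injective2} already give the implications (i) $\Rightarrow$ (iii) and (ii) $\Rightarrow$ (iii), so only (iii) $\Rightarrow$ (i) and (iii) $\Rightarrow$ (ii) remain. Both rest on a single consequence of finite dimensionality: if $T$ is injective then $[T]=\inf\{\|Tx\|:x\in S_X\}>0$, because $x\mapsto\|Tx\|$ is continuous and strictly positive on the compact sphere $S_X$.

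For (iii) $\Rightarrow$ (i), fix $x,y\in X$ with $x\perp_B y$; the cases $x=0$ or $y=0$ are trivial, so I assume both nonzero. Since $x\perp_B y$ one has $\inf_\lambda\|x+\lambda y\|=\|x\|$, and $\|T\|=1$ gives $\|Tx\|\le\|x\|$; the operator-norm bound $\|T(x+\lambda y)\|\ge [T]\|x+\lambda y\|$ then yields
\[
\inf_{\lambda\in\mathbb{R}}\|Tx+\lambda Ty\|\ \ge\ [T]\,\|x\|\ \ge\ [T]\,\|Tx\|.
\]
Applying Hahn--Banach to the quotient $Y/\mbox{span}\{Ty\}$ supplies $g\in S_{Y^*}$ with $g(Ty)=0$ and $|g(Tx)|\ge[T]\|Tx\|$, so by \eqref{Dchar} one obtains $Tx\perp_D^{\varepsilon_1}Ty$ with $\varepsilon_1=\sqrt{1-[T]^2}\in[0,1)$.

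For (iii) $\Rightarrow$ (ii), the analogous Hahn--Banach reduction applied to $Y/\mbox{span}\{Tx\}$ leaves me needing a constant $c>0$, depending only on $X$, such that $\inf_\lambda\|y+\lambda x\|\ge c\|y\|$ whenever $x,y\in X\setminus\{0\}$ and $x\perp_B y$. By homogeneity it suffices to prove this on the set $K=\{(x,y)\in S_X\times S_X:x\perp_B y\}$, which is compact since $\perp_B$ is a closed relation. The map $(x,y)\mapsto\inf_\lambda\|y+\lambda x\|$ is continuous on $K$ (the minimizing $\lambda$'s are uniformly bounded, since $|\lambda|\le 2$ as soon as $\|y+\lambda x\|\le 1$) and strictly positive, for a zero value would force $y\in\mbox{span}\{x\}$, hence $y=\pm x$, contradicting $x\perp_B y$ for unit vectors. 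Compactness yields $c=\min_K>0$, and then $\inf_\lambda\|Ty+\lambda Tx\|\ge [T]\inf_\lambda\|y+\lambda x\|\ge [T]c\|y\|\ge [T]c\|Ty\|$, giving $Ty\perp_D^{\varepsilon_2}Tx$ with $\varepsilon_2=\sqrt{1-[T]^2c^2}\in[0,1)$.

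The only point that requires real work is the compactness--positivity step in (iii) $\Rightarrow$ (ii); it is precisely where finite dimensionality is essential, because in infinite dimensions the ratio $\inf_\lambda\|y+\lambda x\|/\|y\|$ under $x\perp_B y$ can degenerate to zero and no uniform $c$ exists.
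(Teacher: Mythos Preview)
Your proof is correct. The paper, by contrast, argues the cycle (i)$\Rightarrow$(ii)$\Rightarrow$(iii)$\Rightarrow$(i): for (i)$\Rightarrow$(ii) it notes that $Tx,Ty$ are linearly independent, invokes \cite[Proposition~2.2]{CKS2} to get a pairwise $\varepsilon_{Tx,Ty}$, and then appeals to the compactness argument of \cite[Theorem~2.3]{CKS2} to bound the supremum below $1$; (iii)$\Rightarrow$(i) is handled by the same device. Your route differs in two respects. First, your (iii)$\Rightarrow$(i) is more direct and quantitative: the single inequality $\inf_\lambda\|Tx+\lambda Ty\|\ge[T]\|x\|\ge[T]\|Tx\|$ immediately yields the explicit uniform constant $\varepsilon_1=\sqrt{1-[T]^2}$, with no need to pass through linear independence or an auxiliary lemma. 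Second, your (iii)$\Rightarrow$(ii) is a self-contained reproof of the compactness step the paper outsources to \cite{CKS2}; the positivity argument on $K=\{(x,y)\in S_X\times S_X:x\perp_B y\}$ is exactly the uniform approximate symmetry of Birkhoff--James orthogonality in finite dimensions. The payoff of your approach is explicit constants and no external citations; the paper's version is shorter on the page precisely because it delegates that work.
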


\begin{proof}
    (i)$\implies$ (ii) Let $x,y\in S_X$ be such that $x\perp_B y$. If both $Tx$ and $Ty$ are non-zero then it follows from (i) and Eqn.~(\ref{Dchar}) that $Tx$ and $Ty$ are linearly independent.  It follows from \cite[Proposition 2.2]{CKS2} that there exists $\varepsilon_{Tx,Ty}\in[0,1)$ such that $Ty\perp^{\varepsilon_{Tx,Ty}}_B Tx$.  Let $\varepsilon_{Tx,Ty}^*$ be infimum of all such $\varepsilon_{Tx,Ty}$. Let $$
    \varepsilon=sup_{x\in S_X}sup_{y\in x^\perp\cap S_X}\varepsilon_{Tx,Ty}^*.$$ By using compactness of $S_X$, (i) and arguments similar to \cite[Theorem 2.3]{CKS2} we can show that $\varepsilon<1$. Thus,  (i)$\implies$ (ii) follows.

    (ii) $\implies$ (iii) follows by Theorem~\ref{injective2}

     (iii) $\implies$ (i) Let $x,y\in S_X$. If $x\perp_By$ then it follows from injectivity of $T$ that  $Tx$ and $Ty$  are linearly independent. Now, (i) follows by using arguments similar to the proof of (i) $\implies$ (ii).
\end{proof}

In Section 2, we have used \cite[Theorem 2.4]{SJMAA} several times.  \cite[Theorem 2.4]{SJMAA} states that if $T\in S_{\mathcal{B}(X)}$ and $x\in M_T$ such that $x,Tx$ are smooth points then $T$ is left orthogonality preserving at $x$, that is, $x\perp_B y\implies Tx\perp_B Ty$ for all $y\in X$. We now show that if $T$ is a compact operator defined on reflexive Banach space with $[T]>0$ then $T$ is $\varepsilon_x$-CLAOR  at smooth norm attainment element $x$  of $T$ for some $\varepsilon_x\in[0,1)$. 
 In the following result the notation $y_{n_k}\overset{w}\longrightarrow y$ means the sequence $\{y_{n_k}\}$ converges to $y$ in the weak topology of $X$, where $\{y_{n_k}\}\subseteq X$ and $y\in X$.

\begin{theorem}\label{thmCAOR}
    Let $X$ be a  reflexive Banach space and $T\in S_{\mathcal{K}(X)}$. Let $[T]>0$ and $x\in M_T$ for some smooth point $x\in S_X$. Then $T$ is $\varepsilon_x$-CLAOR at $x$ and for some $\varepsilon_x\in[0,1)$.
\end{theorem}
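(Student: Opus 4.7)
I would argue by contradiction, extracting a witness sequence $\{y_n\}$ and passing to a limit to force an incompatibility with the smoothness of $x$ and the condition $x\in M_T$. Using the dual characterization $u\perp_B^\varepsilon v\iff\exists f\in\mathcal{J}(u):|f(v)|\le\varepsilon\|v\|$ recorded in the introduction, for each $y\in S_X$ with $x\perp_B y$ define
$$\varepsilon_y^\ast:=\min_{f\in\mathcal{J}(Ty)}\frac{|f(Tx)|}{\|Tx\|},$$
the minimum being attained by weak-$\ast$ compactness of $\mathcal{J}(Ty)$. Then $T$ is $\varepsilon$-CLAOR at $x$ exactly when $\sup_y\varepsilon_y^\ast\le\varepsilon$, so it suffices to prove $\sup_y\varepsilon_y^\ast<1$. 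The hypothesis $[T]>0$ gives $\|Tz\|\ge[T]\|z\|$ for every $z\in X$; in particular $T$ is injective, and $Tx,Ty$ are linearly independent whenever $y\in S_X$ and $x\perp_B y$ (if $Ty=\alpha Tx$ then injectivity forces $y=\alpha x$, and $x\perp_B\alpha x$ with $\|y\|=1$ is impossible), so \cite[Proposition 2.2]{CKS2} yields $\varepsilon_y^\ast<1$ for every admissible $y$.

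Now suppose for contradiction that $\sup_y\varepsilon_y^\ast=1$ and pick $\{y_n\}\subseteq S_X$ with $x\perp_B y_n$, $\varepsilon_{y_n}^\ast\to 1$, together with $f_n\in\mathcal{J}(Ty_n)$ satisfying $|f_n(Tx)|=\varepsilon_{y_n}^\ast\|Tx\|$. Reflexivity of $X$ together with the Eberlein--\v{S}mulian theorem allows me to extract, after relabeling, $y_n\overset{w}{\longrightarrow}y^\ast$ in $X$ and $f_n\to\tilde g$ in the weak-$\ast$ topology of $X^\ast$. Compactness of $T$ upgrades the weak convergence of $\{y_n\}$ to norm convergence $Ty_n\to Ty^\ast$, so $\|Ty^\ast\|\ge[T]>0$. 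Smoothness of $x$ gives $\mathcal{J}(x)=\{f_x\}$, so the condition $x\perp_B y_n$ reads $f_x(y_n)=0$ and passes to the weak limit as $f_x(y^\ast)=0$. The computation
$$f_n(Ty^\ast)=f_n(Ty_n)+f_n(Ty^\ast-Ty_n)=\|Ty_n\|+o(1)\longrightarrow\|Ty^\ast\|,$$
combined with $\|\tilde g\|\le 1$, forces $\tilde g(Ty^\ast)=\|Ty^\ast\|$ and hence $\tilde g\in\mathcal{J}(Ty^\ast)$; similarly $|\tilde g(Tx)|=\lim|f_n(Tx)|=\|Tx\|$, so $\tilde g\in\mathcal{J}(Tx)\cup\mathcal{J}(-Tx)$.

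To finish, observe that because $x\in M_T$ and $\|T\|=1$, every $h\in\mathcal{J}(Tx)$ satisfies $(T^\ast h)(x)=h(Tx)=1=\|x\|$ and $\|T^\ast h\|\le 1$, so $T^\ast h\in\mathcal{J}(x)=\{f_x\}$; that is, $T^\ast h=f_x$ for every $h\in\mathcal{J}(Tx)$. Without loss of generality $\tilde g\in\mathcal{J}(Tx)$ (otherwise replace $\tilde g$ by $-\tilde g$; the sign flip is harmless), whence
$$\tilde g(Ty^\ast)=(T^\ast\tilde g)(y^\ast)=f_x(y^\ast)=0,$$
contradicting $\tilde g(Ty^\ast)=\|Ty^\ast\|>0$.

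The main obstacle will be the dual-side passage to the limit: extracting a weak-$\ast$ accumulation point $\tilde g$ of the minimizing functionals $f_n$ and verifying $\tilde g\in\mathcal{J}(Ty^\ast)$ requires both reflexivity (Eberlein--\v{S}mulian on $B_{X^\ast}$) and compactness of $T$ (to convert weak convergence of $\{y_n\}$ into norm convergence of $\{Ty_n\}$). Once that is secured, smoothness of $x$ together with $x\in M_T$ collapses $T^\ast\mathcal{J}(Tx)$ to the single functional $f_x$, and the contradiction drops out.
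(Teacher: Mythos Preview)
Your proof is correct and follows essentially the same contradiction strategy as the paper: extract a witness sequence $\{y_n\}$ with $x\perp_B y_n$, pass to weak limits $y_n\overset{w}{\to}y^\ast$ and $f_n\overset{w^\ast}{\to}\tilde g$, use compactness of $T$ to get norm convergence $Ty_n\to Ty^\ast$, and then derive a contradiction from smoothness of $x$ and $[T]>0$. The only cosmetic differences are that the paper phrases the key identification as $\mathcal{J}(x)=\{f\circ T\}$ rather than $T^\ast\tilde g=f_x$, and obtains its contradiction as $\lim\|Ty_{n_k}\|=0$ against $[T]>0$, whereas you reach $\tilde g(Ty^\ast)=0$ against $\tilde g(Ty^\ast)=\|Ty^\ast\|>0$; these are the same argument unwound one step differently.
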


\begin{proof}
    Let $x\in M_T$ be a smooth point for some $x\in S_X$. Let $T$ is not  $\varepsilon_x$-CLAOR for any $\varepsilon_x\in[0,1)$. Then there exist $\{y_n\}\subseteq S_X$ and $\varepsilon_n\nearrow 1$ such that $x\perp_B y_n$ but $Ty_n\not\perp_B^{\varepsilon_n}Tx$. Thus, for any $f_{Ty_n}\in J(Ty_n)$ we have $|f_{Ty_n}(Tx)|>\varepsilon_n \|Tx\|=\varepsilon_n$. For every $Ty_n$ we choose one such $f_{Ty_n}\in J(y_n)$. 
    
    Now, by using reflexivity of $X$ and $X^*$ we can obtain subsequences $\{y_{n_k}\}\subseteq\{y_n\}$ and $\{f_{Ty_{n_k}}\}\subseteq\{f_{Ty_n}\}$ such that $y_{n_k}\overset{w}\longrightarrow y$  and $f_{Ty_{n_k}}\overset{w}\longrightarrow f$ for some $y\in B_X$ and $f\in B_{X^*}$. Thus, by using compactness of $T$, we get $Ty_{n_k}\longrightarrow Ty$. Also,  from $f_{Ty_{n_k}}\overset{w}\longrightarrow f$, we get $f_{Ty_{n_k}}(Tx)\longrightarrow f(Tx)$. Thus, $|f_{Ty_{n_k}}(Tx)|>\varepsilon_{n_k} \|Tx\|=\varepsilon_{n_k}$ and $\varepsilon_n\nearrow 1$ implies that $f\circ T\in S_{X^*}$ and $|f\circ T(x)|=1=\|x\|$. Using smoothness of $x$, we get $\mathcal{J}(x)=\{f\circ T\}$. 

    Now, by using $x\perp_B y_{n_k}$ and $\mathcal{J}(x)=\{f\circ T\}$, we get $f(Ty_{n_k})=0$ for all $k$ and thus $Ty_{n_k}\longrightarrow Ty$ implies that $f(Ty)=0$. Also, we can show that $f_{Ty_{n_k}}(Ty_{n_k})\longrightarrow f(Ty)$. This gives $\lim_{k\longrightarrow\infty} f_{{Ty}_{n_k}}(Ty_{n_k})=\lim_{k\longrightarrow\infty}\|Ty_{n_k}\|=0$, a contradiction to $[T]>0$. Thus,  $x\perp_B y$ implies $Ty\perp_B^{\varepsilon_x}Tx$ for some $\varepsilon_x\in[0,1)$ and for all $y\in X$ and the result follows.
    \end{proof}

 With some minor modifications in the arguments of Theorem~\ref{thmCAOR} and \cite[Theorem 4.2]{CW} we can obtain the following result with strengthens \cite[Theorem 4.2]{CW}.

\begin{theorem}\label{thmsmooth}
    Let $X$ be a  finite-dimensional smooth Banach space. Let $T\in S_{\mathcal{B}(X)}$ be such that $[T]>0$ . Then $T$ is $\varepsilon$-CLAOR with respect to $M_T$ for some $\varepsilon \in[0,1)$.
\end{theorem}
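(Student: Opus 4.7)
The plan is to mirror the argument of Theorem~\ref{thmCAOR}, but to let both the norm-attainment element and the orthogonal direction vary simultaneously, using norm compactness in finite dimensions in place of weak compactness. Suppose for contradiction that $T$ is not $\varepsilon$-CLAOR with respect to $M_T$ for any $\varepsilon\in[0,1)$. Then there exist sequences $\{x_n\}\subseteq M_T$, $\{y_n\}\subseteq S_X$ with $x_n\perp_B y_n$, and $\varepsilon_n\nearrow 1$, such that $Ty_n\not\perp_B^{\varepsilon_n}Tx_n$. By the characterization of $\perp_B^{\varepsilon}$ recalled in Section~1, this produces $f_n\in\mathcal{J}(Ty_n)$ with $|f_n(Tx_n)|>\varepsilon_n\|Tx_n\|=\varepsilon_n$.

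Using finite-dimensionality of $X$ and $X^*$, I would pass to a common subsequence so that $x_n\to x_0$ and $y_n\to y_0$ in $S_X$, and $f_n\to f$ in $B_{X^*}$. By closedness of $M_T$ (continuity of $x\mapsto\|Tx\|$), $x_0\in M_T$. Continuity of $T$ gives $Tx_n\to Tx_0$ and $Ty_n\to Ty_0$, with $\|Tx_0\|=1$ and $\|Ty_0\|\geq [T]>0$. Passing to the limit in $f_n(Ty_n)=\|Ty_n\|$ yields $f(Ty_0)=\|Ty_0\|>0$, which forces $\|f\|=1$ and $f\in\mathcal{J}(Ty_0)$. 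Passing to the limit in $|f_n(Tx_n)|>\varepsilon_n$ yields $|f(Tx_0)|\geq 1=\|Tx_0\|$, so $\pm f\circ T\in\mathcal{J}(x_0)$.

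The smoothness of $X$ enters at the last step. For each $n$, smoothness of $x_n$ combined with $x_n\perp_B y_n$ selects a unique $g_n\in\mathcal{J}(x_n)$ satisfying $g_n(y_n)=0$. Because the duality map $x\mapsto g_x$, with $\{g_x\}=\mathcal{J}(x)$, is norm continuous on $S_X$ in a finite-dimensional smooth Banach space, $g_n\to g_{x_0}$, where $\{g_{x_0}\}=\mathcal{J}(x_0)$; in the limit $g_{x_0}(y_0)=0$. By the previous step $g_{x_0}=\pm f\circ T$, hence $f(Ty_0)=\pm g_{x_0}(y_0)=0$, contradicting $f(Ty_0)>0$.

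The main obstacle is the uniformity step: Theorem~\ref{thmCAOR} handles only a single fixed smooth $x\in M_T$, whereas here $\varepsilon$ must work uniformly over all of $M_T$. This requires the simultaneous extraction of convergent subsequences from $\{x_n\}$, $\{y_n\}$, and $\{f_n\}$, made possible by norm compactness in finite dimensions, together with the upgrade of norm-to-weak${}^*$ continuity of the duality map to genuine norm continuity available in a finite-dimensional smooth space. These are precisely the ``minor modifications'' of Theorem~\ref{thmCAOR} referred to just before the statement; once in place, the single-point contradiction runs through essentially unchanged.
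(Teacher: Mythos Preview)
Your proposal is correct and is exactly the ``minor modification'' of Theorem~\ref{thmCAOR} the paper alludes to: you let the base point $x_n\in M_T$ vary together with $y_n$, replace weak compactness by norm compactness in finite dimensions, and close the contradiction via smoothness of the limit point $x_0$. One small simplification: you do not actually need the norm continuity of the duality map in step~9. From $\|x_n+\lambda y_n\|\geq\|x_n\|$ for all $\lambda$ and norm convergence $x_n\to x_0$, $y_n\to y_0$, you get $x_0\perp_B y_0$ directly; smoothness of $x_0$ then gives $g_{x_0}(y_0)=0$, hence $f(Ty_0)=\pm g_{x_0}(y_0)=0$, the desired contradiction.
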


\section{Acknowledgments}
The author would like to thank Jacek Chmieli\'{n}ski (Pedagogical University of Krakow, Polland), Kallol Paul (Jadavpur University, India), and Debmalya Sain (IIIT Raichur, India) for various discussions on orthogonality-related topics in normed linear spaces.
\bigskip


\begin{thebibliography}{99}
\bibitem{SUR} J. Alonso, H. Martini, S. Wu, \textit{On Birkhoff-James orthogonality and isosceles orthogonality in normed linear spaces}, Aequat.  Math. \textbf{83} (2012), 153--189. 

\bibitem{Birkhoff} G. Birkhoff, \textit{Orthogonality in linear metric spaces}, Duke Math. J. \textbf{1} (1935), 169--172. 

 \bibitem{T3} A. Blanco and A.Turn\v{s}ek, \textit{On maps that preserves orthogonality in normed linear spaces}, Proc. Roy. Soc. Edinburgh Sect. A \textbf{136} (2006), no. 4, 706--719. 





\bibitem{BRS} B. Bose, S. Roy and D. Sain, \textit{Birkhoff-James Orthogonality and Its Local
Symmetry in Some Sequence Spaces}, Rev. R. Acad. Cienc. Exactas Fís. Nat. Ser. A Mat. RACSAM \textbf{117}(2023), no.3, Paper No. 93.


\bibitem{CAOT} J. Chmieli\'{n}ski,\textit{Operators reversing orthogonality in normed spaces}, Adv. Oper. Theory \textbf{1} (2016), no. 1, 8--14.


\bibitem{C} J. Chmieli\'{n}ski, \textit{On an $\epsilon$-Birkhoff
orthogonality}, J. Inequal. Pure Appl.  Math. {\bf 6}(3) (2005)
Article 79.

\bibitem{CKS2} J. Chmieli\'{n}ski, D. Khurana and D. Sain, \textit{Local approximate symmetry of Birkhoff-James orthogonality in normed linear spaces}, Results Math. \textbf{76}(3) (2021), 1--26. 

\bibitem{CSW} J. Chmieli\'{n}ski, T. Stypu{\l}a and P.
W\'{o}jcik, \textit{Approximate orthogonality in normed spaces and
its applications}, Linear Algebra Appl. {\bf 531} (2017), 305--317.

\bibitem{GPSBOOKC} J. Chmieli\'{n}ski and P.W\'{o}jcik , \textit{Birkhoff-JamesReversing  Orthogonality and Its Stability}, Ulam Type Stability, pp. 57--72. Springer, New York (2019).

\bibitem{CW} J. Chmieli\'{n}ski and P.W\'{o}jcik, \textit{Approximate symmetry of Birkhoff-James orthogonality}, J. Math. Anal. Appl. \textbf{461} (2018), no. 1, 625--640. 


\bibitem{Drag} S.  S.  Dragomir, \textit{On approximation of continuous
linear functionals on normed linear spaces}, An. Univ. Timi\c{s}oara
Ser. \c{S}tiin\c{t}. Mat. {\bf{29}} (1991), no. 1, 51--58.


\bibitem{James2} R. C. James, \textit{Orthogonality and linear functionals in normed linear spaces}, Trans. Amer. Math. Soc. \textbf{61} (1947), 265--292. 

\bibitem{D} D. Khurana, \textit{A study of local symmetry of Birkhoff-James orthogonality in Banach spaces},  Linear Multilinear Algebra (2023) doi.org/10.1080/03081087.2023.2198758.

\bibitem{T4} A. Koldobsky, \textit{Operators preserving orthogonality are isometries}, Proc. R. Soc. Edinb.
A \textbf{123} (1993), 835-–837.

\bibitem{MSP}A. Mal, D. Sain and K. Paul, \textit{On some geometric
properties of operator spaces}, Banach J. Math. Anal. {\bf 13}
(2019), no. 1,  174--191.





\bibitem{PMW} K. Paul, A. Mal and P.W\'{o}jcik, \textit{Symmetry of Birkhoff–James orthogonality of
operators defined between infinite dimensional Banach spaces}, Linear Algebra Appl. \textbf{563} (2019), 142--153.

\bibitem{GPSBOOK}  K. Paul, D. Sain and P. Ghosh, \textit{Symmetry of Birkhoff-James Orthogonality of Bounded Linear Operators}, Ulam Type Stability, pp. 331-334. Springer, New York (2019).

\bibitem{PSGD} K. Paul, D. Sain, P. Ghosh, \textit{Birkhoff-James orthogonality and smoothness of bounded linear operators}, Linear Algebra Appl. \textbf{506} (2016), 551--563.


\bibitem{SJMAA} D. Sain, \textit{On the norm attainment set of a bounded linear operator}, J. Math. Anal. Appl. \textbf{457} (2018), 66--76. 

\bibitem{S} D. Sain, \textit{Birkhoff-James orthogonality of linear operators on finite-dimensional Banach spaces}, J. Math. Anal. Appl. \textbf{447} (2017), 860--866. 

\bibitem{SGP} D. Sain, P. Ghosh and K. Paul, \textit{On symmetry of Birkhoff-James orthogonality of linear operators
on finite-dimensional real Banach spaces}, Oper. Matrices \textbf{11} (2017), 1087--1095. 

\bibitem{SPLAA} D. Sain and K. Paul, \textit{Operator norm attainment and inner product spaces}, Linear Algebra Appl. \textbf{439} (2013), 2448--2452. 

\bibitem{SPMR} D. Sain, K. Paul, A. Mal and A. Ray, \textit{A complete characterization of smoothness in the space of bounded linear operators}, Linear Multilinear Algebra \textbf{68} (2020), no. 12, 2484--2494. 

\bibitem{T1} A.Turn\v{s}ek, \textit{A remark on orthogonality and symmetry of operators in B(H)}, Linear Algebra Appl. \textbf{537} (2017), 141--150. 
 \bibitem{T2} A.Turn\v{s}ek, \textit{On operators preserving Birkhoff James orthogonality}, Linear Algebra Appl. \textbf{407} (2005), 189--195. 







\bibitem{WOR} P.W\'{o}jcik, \textit{Operators reversing orthogonality and characterization of inner product spaces}, Khayyam J. Math. \textbf{3} (2017), 22--24.
 


\end{thebibliography}
\end{document}